\newcommand{\bx}{\mathbf{x}}
\newcommand{\bX}{\mathbf{X}}
\newcommand{\by}{\mathbf{y}}
\newcommand{\bc}{\mathbf{c}}
\renewcommand{\bf}{\mathbf{f}}
\newcommand{\bu}{\mathbf{u}}
\journal{Computer Methods in Applied Mechanics and Engineering}
\pgfplotsset{compat=newest}
\Crefname{table}{Table}{Tables}
\Crefname{equation}{Eq.}{Eqns.}
\newtheorem{theorem}{Theorem}
\newtheorem{lemma}{Lemma}
\begin{document}

\begin{frontmatter}

\title{Isogeometric Collocation Method for the Fractional Laplacian in the 2D Bounded Domain}

\author[label1]{Kailai Xu}
\ead{kailaix@stanford.edu}
\author[label1,label2]{Eric Darve}
\ead{darve@stanford.edu}
\address[label1]{Institute for Computational and Mathematical Engineering, Stanford University, Stanford, CA, 94305}
\address[label2]{Mechanical Engineering, Stanford University, Stanford, CA, 94305}
 
\begin{abstract}
We consider the isogeometric analysis for fractional PDEs involving the fractional Laplacian in two dimensions. An isogeometric collocation method is developed to discretize the fractional Laplacian and applied to the fractional Poisson problem and the time-dependent fractional porous media equation. Numerical studies exhibit monotonous convergence with a rate of $\mathcal{O}(N^{-1})$, where $N$ is the degrees of freedom. A comparison with finite element analysis shows that the method enjoys higher accuracy per degree of freedom and has better convergence rate. We demonstrate that isogeometric analysis  offers a novel and promising computational tool for nonlocal problems.  
\end{abstract}

\begin{keyword}
Isogeometric Analysis \sep Fractional Laplacian \sep Singularity Subtraction
\end{keyword}
\end{frontmatter}

\section{Introduction}

We consider the two-dimensional nonlocal model involving the Riesz fractional Laplacian, which can be defined through the Fourier transform~\cite{kwasnicki2017ten}
\begin{equation}
    \mathcal{F}((-\Delta)^su)(\bm{\xi}) = |\bm{\xi}|^{2s}\mathcal{F}(u)(\bm\xi)
\end{equation}
where $s\in(0,1)$, $\mathcal{F}$ is the Fourier transform, and $u$ is sufficiently smooth. Equivalently, we have
\begin{equation}\label{equ:def}
    (-\Delta)^s u(\bx) = c_{s,2} \; \mathrm{p.v.}\int_{\mathbb{R}^2}\frac{u(\bx)-u(\by)}{|\bx-\by|^{2+2s}}d\by
\end{equation}
where p.v.\ denotes principal integration and $c_{s,2}$ is the normalization constant
\begin{equation}
    c_{s,2} = \frac{2^{2s}\Gamma(1+s)}{\pi|\Gamma(-s)|}
\end{equation}
The nonlocal models naturally arise from many physical processes that exhibit super-diffusive jump phenomena. However, the computational methods for nonlocal models lag drastically behind their integer-order counterparts and are generally more expensive in both storage and computational cost. Thus, the integer-order differential equations are usually used as approximations in physical modeling. Recently with more computational resources available, interests are shifting back to nonlocal models. Many applications can be modeled by partial differential equations involving the fractional Laplacian, such as the reverse time migration~(RTM) for attenuating media~\cite{wang2017adaptive}, the anomalous diffusion in porous media~\cite{de2010fractional}, and image denoising~\cite{gatto2015numerical}. For a more comprehensive treatment involving the theory and numerical methods of fractional Laplacian, we refer readers to the review paper~\cite{lischke2018fractional}.
 
It is worth mentioning that many other formulations of fractional operators for nonlocal models exist \cite{heydari2018legendre,heydari2013two}. For example, the time-fractional models \cite{heydari2019wavelet,hooshmandasl2016numerical} and the space-time fractional models \cite{heydari2019computational} are two alternative fractional operators to the fractional Laplacian operator. The differential operators in the aforementioned fractional operators are usually decoupled in time or space. In contrast, 
the two spatial dimensions of the fractional Laplacian operator \Cref{equ:def} cannot be decoupled and must be considered altogether in the numerical discretization, which poses unique challenges.

The challenges in solving the fractional partial differential equations involving the fractional Laplacian is many folds. Firstly, the kernel \Cref{equ:def} is singular and therefore requires techniques like computing singular integrals in the boundary element method. The corresponding implementation can be quite challenging, especially in the case where the computational domain is complex and high dimensional. For example, Acosta et al.~\cite{acosta2017short} developed a nontrivial finite element method by applying the Duffy-type transforms and  different quadrature rules according to the relative position of two triangle elements~(i.e., identical, sharing a vertex, sharing an edge and disjoint). Secondly, with homogeneous boundary conditions, the solutions to the fractional Poisson equation are continuous but not continuously differentiable across the boundary\cite{ros2014dirichlet}. This reduces the convergence rates of many numerical methods. Thirdly, the memory and computational cost of the fractional partial differential equations is in general higher than its integer-order counterpart since the resultant linear system is usually dense. In sum, for the fractional partial differential equations involving the fractional Laplacian to be practical, designing an easy-to-implement, accurate and efficient numerical method is desirable. 

Many existing methods suffer from their applicability or numerical issues. For example, \cite{xu2018radial} proposed a radial basis function collocation method for decoupled fractional Laplacian wave equations. The resultant linear system is usually ill-conditioned due to its meshless nature. \cite{kyprianou2017unbiased} proposed a ``walk-on-spheres'' Monte Carlo methods for the fractional Laplacian. The method requires a large number of simulations, especially for small $s$, where the ``jump size'' is very large and thus the variance is large. \cite{xu2018spectral} proposed a spectral method for 2D and 3D but the method is only limited to the unit ball domains.

Since the seminal paper by Hughes~et~al.~\cite{hughes2005isogeometric}, there has been extensive research on the isogeometric analysis. The basic idea is to use the same NURBS-based representation for both the solution and the geometry. By doing so, the isogeometric analysis can represent many complex geometries exactly. Additionally, the parametrized solution has high-order continuous derivatives depending on the choice of the NURBS basis functions, which enables us to work directly with the strong form of the fractional Laplacian.

This inspired us to attack the computational challenges in nonlocal models involving the fractional Laplacian with the isogeometric collocation method. Compared to the finite element analysis~\cite{acosta2017short,ainsworth2017aspects}, the isogeometric collocation method is much easier to implement and we show that our method is more accurate per degree of freedom than the finite element analysis due to its smooth basis functions. Compared to methods such as finite difference methods~\cite{huang2014numerical,2018arXiv180203770M}, isogeometric analysis enables representing various geometrical shapes exactly such as a disk. Isogeometric analysis offers a new way to solve the fractional differential equations and can be generalized to other nonlocal models.

The article is organized as follows. In \Cref{sect:pre}, we describe the isogeometric collocation method. In \Cref{sect:ns}, we propose an algorithm for solving the fractional Poisson problem based on the isogeometric collocation method, where we construct the interpolation matrix and the discrete fractional Laplacian matrix. In \Cref{sect:ne} we perform the numerical benchmarks on generalized eigenvalues and eigenfunctions of the fractional Laplacian operator. We are able to compare the numerical solutions with analytical solutions and access convergence rates. Specifically, we compare the accuracy of the proposed method with the finite element method on a per-degree-of-freedom basis. We also consider the application of our method to the fractional porous media equation. The numerical results show the potential of isogeometric analysis for solving nonlocal models. We discuss the limitations and future research directions in \Cref{sect:conc}.

For reference, we list all notation used in this paper in \Cref{tab:notation}. The codes for this work will be available at \url{https://github.com/kailaix/IGA.jl}. 

\begin{table}[hbtp]
  \begin{tabular}{l|cc}
  \toprule
    Notation & Description \\
    \midrule
    $s$& the fractional index  \\
   $c_{s,2}$ & the normalization constant in the fractional Laplacian \\
   $(-\Delta)^s$ & the fractional Laplacian \\
    $B_{i,p}$ & the B-spline basis function \\
    $N_{i,j}$ & the NURBS basis function \\
    $\mathcal{C}^k$ & the function space with up to $k$-th order continuous derivatives\\
    $w_i$ & weights associated with the NURBS basis functions \\
    $\tilde X_i, \tilde \bX_{ij}$ & control points in 1D and 2D\\
    $F, \mathbf{F}$ & mapping from the parameter space to the physical space\\
    $\mathbf{V}$ & the interpolation function space defined by the NURBS basis functions\\
    $\hat \bu_{ij}, \hat \bx_{ij}$ & the collocation points in the parameter and physical space\\
    $\rho, a$& the window function and the window size\\
    $(-\Delta)_h~((-\Delta)^s_h)$ &  the discrete~(fractional) Laplacian\\
    $\mathbf{M}$ & the interpolation matrix\\
    $\mathbf{L}$ & the discrete fractional Laplacian matrix\\
    \bottomrule
  \end{tabular}
  \caption{Notation used in this paper}
  \label{tab:notation}
\end{table}

\section{Isogeometric Collocation Method}\label{sect:pre}

In isogeometric analysis, the basis functions for approximating the solutions are non-uniform rational basis splines~(NURBS). These basis functions emanate from computer-aided geometric design~(CAGD) in contrast to the Lagrange finite element interpolation polynomials used in finite element analysis~\cite{temizer2011contact}. In this section, we present an overview of the isogeometric collocation method. For details on isogeometric analysis, see the appendix.

Consider the generalized boundary-value problem 
\begin{equation}\label{equ:modelproblem}
      \mathcal{L}u = f \mbox{ in } \Omega, \qquad
    u = 0 \mbox{ in } \Omega^c
\end{equation}
where the solution $u:\mathbb{R}^2\rightarrow \mathbb{R}$ has a compact support in $\Omega$ and $\mathcal{L}$ is the fractional Laplacian, i.e., $(-\Delta)^s$. $f$ is the source term. 

We introduce a NURBS representation of $\Omega$. The collocation points are comprised of a finite set in the parameter space $\{\hat\bu_i\}_{i\in\mathcal{I}}$, where $\mathcal{I} = \mathcal{I}_D \cup \mathcal{I}_L$ is divided into two distinct sets~\cite{auricchio2010isogeometric}. The points in $\mathcal{I}_D$ lie on $\partial \Omega$ while those in $\mathcal{I}_L$ are inside $\Omega$. Then, the isogeometric collocation method finds $u_h\in \mathbf{V}$ such that
\begin{align}
    \mathcal{L}u_h(\mathbf{F}(\hat\bu_i)) = f(\mathbf{F}(\hat\bu_i)) &\qquad i\in \mathcal{I}_L\\
    u_h(\mathbf{F}(\hat\bu_i)) = 0 &\qquad  i\in \mathcal{I}_D
\end{align}

A commonly used set of collocation points is derived from the Greville abscissae~\cite{johnson2005higher}. The Greville abscissae $\bar u_i$ are related to the knot vector $\{u_1, u_2, \ldots, u_{l+1}\}$ by
\begin{equation}
    \bar u_i = \frac{u_{i+1}+u_{i+2}+\ldots+u_{i+p}}{p}
\end{equation}
Analogously, in two dimension, we can construct the Greville abscissae $\bar u_i$, $\bar v_j$ for both dimensions and consider the tensor product
\begin{equation*}
    \hat \bu_{ij} =(\bar u_i, \bar v_j)\qquad \hat \bx_{ij} = \mathbf{F}(\hat \bu_{ij})
\end{equation*}
Then the isogeometric analysis collocation method  reads: find $u_h\in\mathbf{V}$ such that
\begin{align*}
    \mathcal{L}u_h(\hat \bx_{ij}) &= f(\hat \bx_{ij}) &\qquad i=2,3,\ldots,l_u-p-1;  j=2,3,\ldots,l_v-q-1\\
    u_h(\hat \bx_{ij}) &= 0 &\qquad  (i,j) \in \{1, l_u-p\}\times \{1,2,\ldots, l_v-q\} \cup \{1,2,\ldots, l_u-p\}\times \{1, l_v-q\} 
\end{align*}

\section{Numerical Scheme}\label{sect:ns}

\subsection{Singularity Subtraction}

In this section, we will consider the discretization of the fractional Laplacian operator. Assume the knot vectors for $u$ and $v$ are 
    $$\{u_1, u_2, \ldots, u_{l_u+1}\}\quad \{v_1, v_2, \ldots, v_{l_v+1}\}$$
and the degrees are $p$ and $q$ respectively.  Let $\hat\bu_{ij}$ be the collocation points derived from the Greville abscissae, and $\hat\bx_{ij} = \mathbf{F}(\hat\bu_{ij})$. 

Assume that $u(\bx)\in\mathcal{C}^4$, we compute the principal value integral 
\[\mathrm{p.v.}~\int_{\mathbb{R}^2} \frac{u(\bx)-u(\by)}{|\bx-\by|^{2+2s}}  \]
using the singularity subtraction method~\cite{2018arXiv180203770M}
\begin{equation}\label{equ:111}
\int_{\mathbb{R}^2}  \frac{u(\bx)-u(\by)}{|\bx-\by|^{2+2s}} d\by =\int_{\mathbb{R}^2} \frac{u(\bx)-u(\by) + \rho(|\bx-\by|) g_\bx(\by)}{|\bx-\by|^{2+2s}} d\by 
- \int_{\mathbb{R}^2}  \frac{\rho(|\bx-\by|)g_\bx(\by)}{|\bx-\by|^{2+2s}} d\by
\end{equation}
where $\rho$ is a window function defined by
\begin{equation}\label{equ:window}
    \rho(r) =
    \begin{cases}
    1-35\left(\frac{r}{a}\right)^4 + 84\left(\frac{r}{a}\right)^5 - 70\left(\frac{r}{a}\right)^6 + 20 \left(\frac{r}{a}\right)^7 & r<a \\
        0 & \text{otherwise}
    \end{cases}
\end{equation}
Here $a>0$ is the window size and $g_\bx(\by)$ is the truncated Taylor expansion of $u(\by)-u(\bx)$
\begin{align}
    g_\bx(\by) &:= u_1(\bx) v_1 + u_2(\bx) v_2\notag \\
    &+ u_{11}(\bx)\frac{v_1^2}{2} + u_{22}(\bx)\frac{v_2^2}{2} + u_{12}(\bx)v_1v_2\notag \\
    &+ u_{111}(\bx) \frac{v_1^3}{6} + u_{112}(\bx)\frac{v_1^2v_2}{2} + u_{122}(\bx) \frac{v_1v_2^2}{2} + u_{222}(\bx) \frac{v_2^3}{6}\label{equ:gx}
\end{align}

Here we have used the abbreviation for the derivatives of $\mathbf{v}=(v_1,v_2) = \by-\bx$,
\begin{equation*}
    u_i := \frac{\partial u}{\partial \bx_i}, \;
    u_{ij} :=\frac{\partial^2 u}{\partial \bx_{ij}^2}, \;
    u_{ijk} :=\frac{\partial^3 u}{\partial \bx_{ijk}^3}, \; i,j,k \in \{1,2\}
\end{equation*}
The function $\rho$ was chosen such that $\rho(r)=1+\mathcal{O}(r^4)$, $r\rightarrow 0+$, $u(\bx)-u(\by) + \rho(|\bx-\by|) g_\bx(\by)\sim \mathcal{O}(|\bx-\by|^4)$, $\by\rightarrow \bx$, so that the integrand in  
\begin{equation}\label{equ:first}
  \int_{\mathbb{R}^2} \frac{u(\bx)-u(\by) + \rho(|\bx-\by|) g_\bx(\by)}{|\bx-\by|^{2+2s}} d\by 
\end{equation}
is continuous, and thus integrable. \Cref{fig:gx} shows $\frac{u(\bx)-u(\by) + \rho(|\bx-\by|) g_\bx(\by)}{|\bx-\by|^{2+2s}}$ and $\frac{|u(\bx)-u(\by)|}{|\bx-\by|^{2+2s}}$ for $s=0.3$. We can see that by singularity subtraction, the integrand is turned into a continuous function suitable for a standard numerical quadrature rule. The window function is shown in \Cref{fig:windowfunc}. 

\begin{figure}[htbp]
\centering
\includegraphics[width=0.49\textwidth]{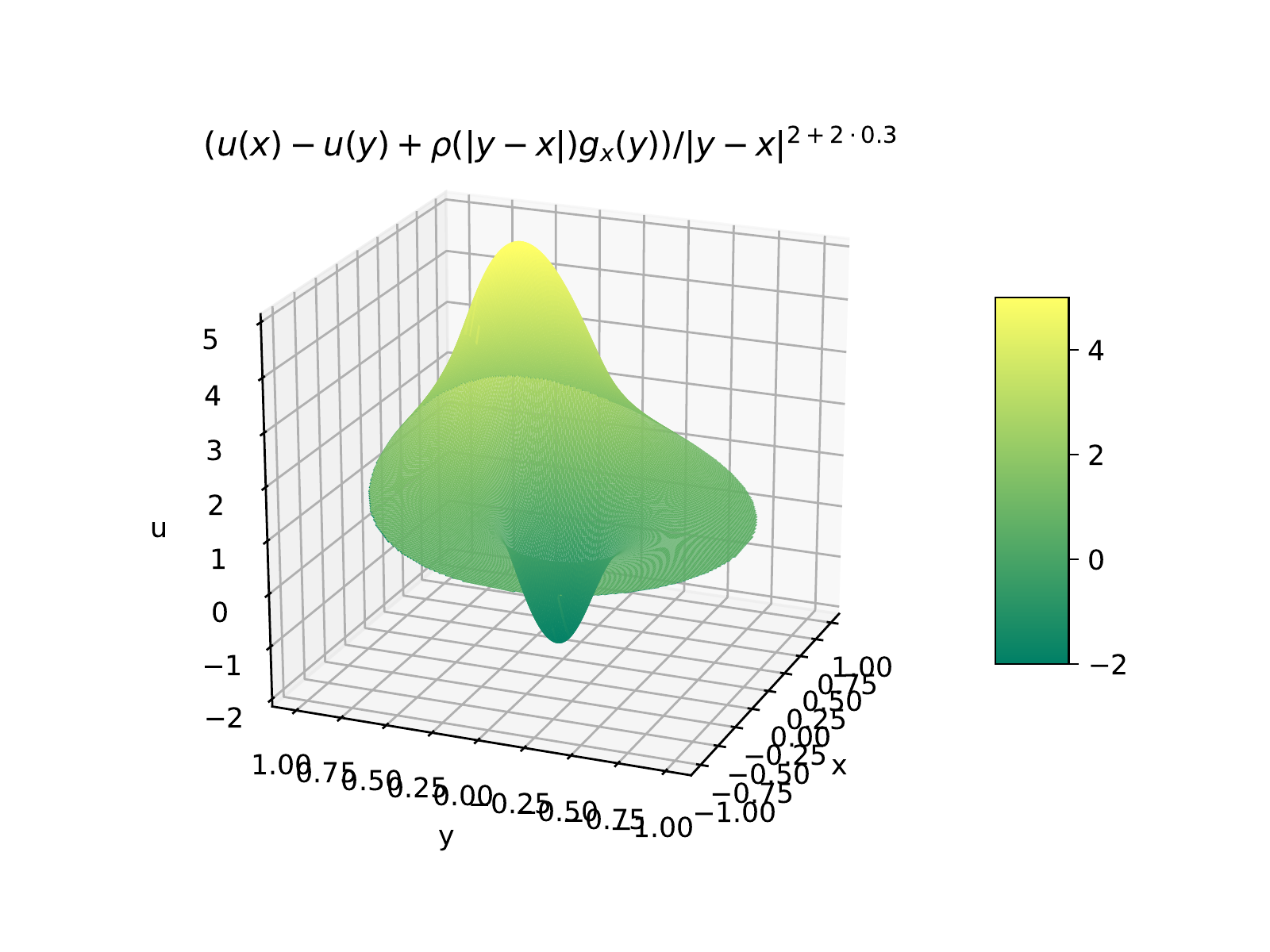}~
\includegraphics[width=0.49\textwidth]{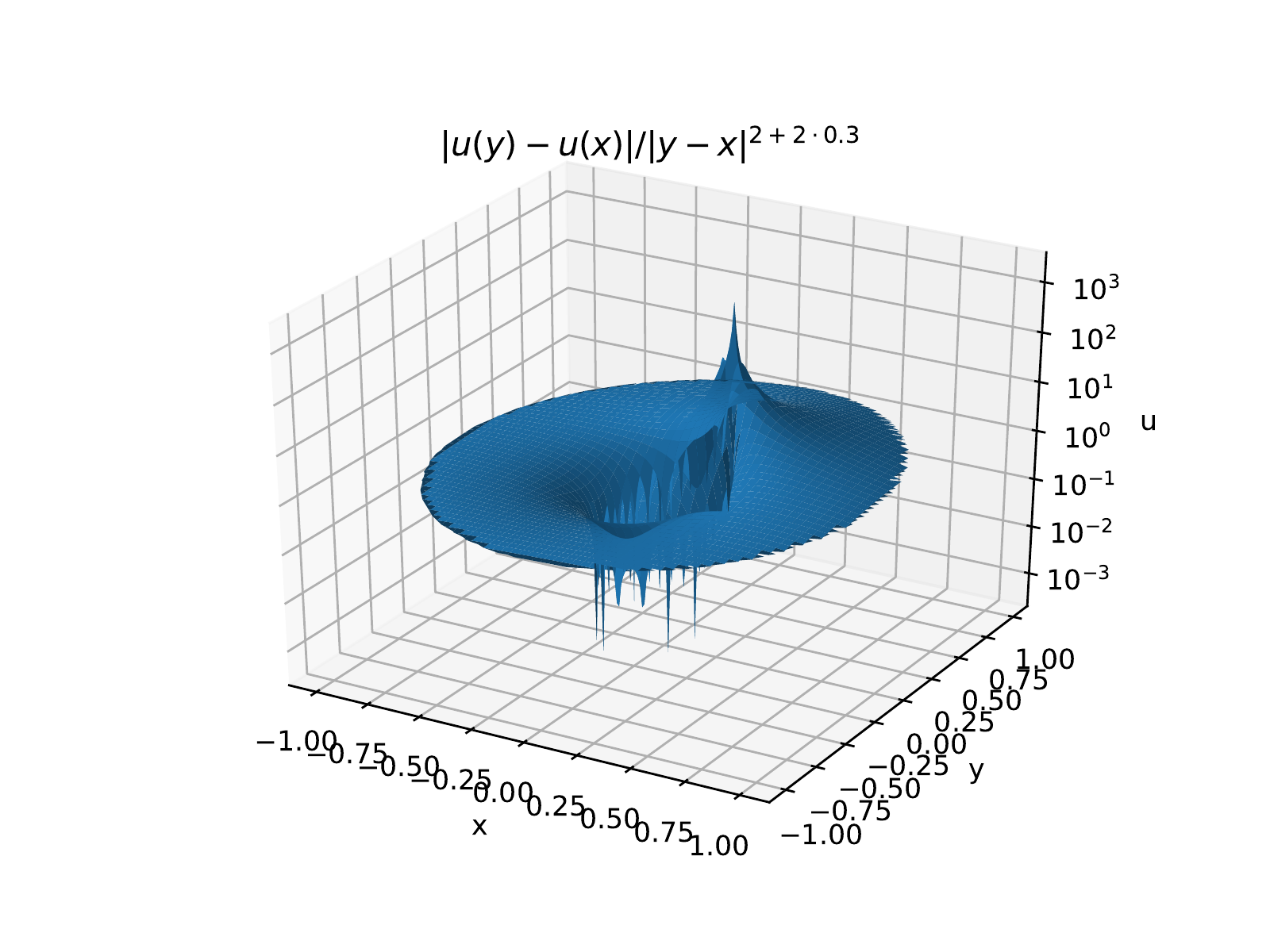}
\caption{$\frac{u(\bx)-u(\by) + \rho(|\bx-\by|) g_\bx(\by)}{|\bx-\by|^{2+2s}}$ and $\frac{|u(\bx)-u(\by)|}{|\bx-\by|^{2+2s}}$ for $s=0.3$ and $u(\bx)=(1-|\bx|^2)^{1+s}$. The function on the left is continuous while the one on the right has a singularity. By singularity subtraction, the integrand is turned into a continuous function suitable for a standard numerical quadrature rule. Note that the derivative of the integrand is not necessarily continuous.} 
\label{fig:gx}
\end{figure}

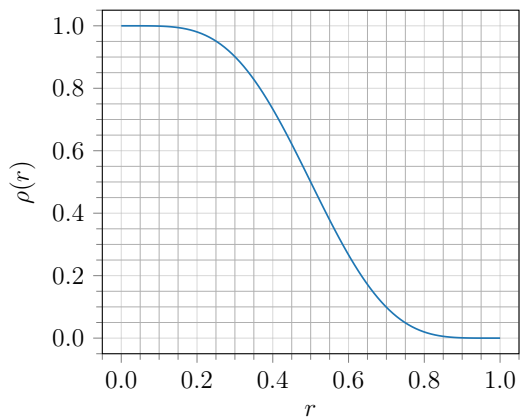
\begin{figure}[htpb]
\centering
\scalebox{0.8}{
\begin{tikzpicture}

\definecolor{color0}{rgb}{0.12156862745098,0.466666666666667,0.705882352941177}

\begin{axis}[
tick align=outside,
tick pos=left,
x grid style={white!69.01960784313725!black},
xlabel={$r$},
grid = both,
minor tick num=3,
    every major grid/.style={red, opacity=0.5},
xmin=-0.05, xmax=1.05,
xtick={-0.2,0,0.2,0.4,0.6,0.8,1,1.2},
xticklabels={â0.2,0.0,0.2,0.4,0.6,0.8,1.0,1.2},
y grid style={white!69.01960784313725!black},
ylabel={$\rho(r)$},
ymin=-0.05, ymax=1.05,
ytick={-0.2,0,0.2,0.4,0.6,0.8,1,1.2},
yticklabels={â0.2,0.0,0.2,0.4,0.6,0.8,1.0,1.2}
]
\addplot [thick, color0, forget plot]
table [row sep=\\]{%
0	1 \\
0.0101010101010101	0.999999987190997 \\
0.0202020202020202	0.999999603838709 \\
0.0303030303030303	0.999997093171168 \\
0.0404040404040404	0.999988167211075 \\
0.0505050505050505	0.999965126749867 \\
0.0606060606060606	0.999916221311124 \\
0.0707070707070707	0.999825224542672 \\
0.0808080808080808	0.999671201741958 \\
0.0909090909090909	0.999428447456598 \\
0.101010101010101	0.999066572311635 \\
0.111111111111111	0.998550719396774 \\
0.121212121212121	0.997841891700844 \\
0.131313131313131	0.996897373206895 \\
0.141414141414141	0.995671227359679 \\
0.151515151515152	0.99411485768782 \\
0.161616161616162	0.992177616405719 \\
0.171717171717172	0.989807447835146 \\
0.181818181818182	0.986951554473648 \\
0.191919191919192	0.983557074496166 \\
0.202020202020202	0.979571760407803 \\
0.212121212121212	0.974944649469387 \\
0.222222222222222	0.969626717393359 \\
0.232323232323232	0.963571507655626 \\
0.242424242424242	0.956735729589302 \\
0.252525252525253	0.949079819218734 \\
0.262626262626263	0.940568457556902 \\
0.272727272727273	0.931171041826135 \\
0.282828282828283	0.920862105771147 \\
0.292929292929293	0.909621685914672 \\
0.303030303030303	0.897435631259411 \\
0.313131313131313	0.884295854565639 \\
0.323232323232323	0.870200523931685 \\
0.333333333333333	0.855154193974496 \\
0.343434343434343	0.839167876449755 \\
0.353535353535354	0.822259050665401 \\
0.363636363636364	0.804451614529052 \\
0.373737373737374	0.785775777528596 \\
0.383838383838384	0.766267897376256 \\
0.393939393939394	0.745970262449592 \\
0.404040404040404	0.724930822538302 \\
0.414141414141414	0.703202870753277 \\
0.424242424242424	0.680844679774115 \\
0.434343434343434	0.657919095903273 \\
0.444444444444444	0.634493094659224 \\
0.454545454545455	0.610637301877279 \\
0.464646464646465	0.586425484495356 \\
0.474747474747475	0.561934015382658 \\
0.484848484848485	0.537241316722199 \\
0.494949494949495	0.512427286583205 \\
0.505050505050505	0.487572713416795 \\
0.515151515151515	0.462758683277802 \\
0.525252525252525	0.438065984617341 \\
0.535353535353535	0.413574515504645 \\
0.545454545454545	0.389362698122722 \\
0.555555555555556	0.365506905340775 \\
0.565656565656566	0.342080904096726 \\
0.575757575757576	0.319155320225887 \\
0.585858585858586	0.296797129246723 \\
0.595959595959596	0.275069177461694 \\
0.606060606060606	0.254029737550406 \\
0.616161616161616	0.233732102623742 \\
0.626262626262626	0.214224222471405 \\
0.636363636363636	0.195548385470948 \\
0.646464646464647	0.177740949334602 \\
0.656565656565657	0.160832123550252 \\
0.666666666666667	0.144845806025504 \\
0.676767676767677	0.129799476068314 \\
0.686868686868687	0.115704145434353 \\
0.696969696969697	0.102564368740595 \\
0.707070707070707	0.0903783140853314 \\
0.717171717171717	0.0791378942288645 \\
0.727272727272727	0.0688289581738672 \\
0.737373737373737	0.0594315424430967 \\
0.747474747474748	0.0509201807812545 \\
0.757575757575758	0.0432642704106954 \\
0.767676767676768	0.0364284923443918 \\
0.777777777777778	0.0303732826066252 \\
0.787878787878788	0.0250553505306144 \\
0.797979797979798	0.020428239592178 \\
0.808080808080808	0.0164429255038456 \\
0.818181818181818	0.0130484455263513 \\
0.828282828282828	0.010192552164888 \\
0.838383838383838	0.00782238359428788 \\
0.848484848484849	0.00588514231220216 \\
0.858585858585859	0.00432877264034914 \\
0.868686868686869	0.00310262679310824 \\
0.878787878787879	0.00215810829915597 \\
0.888888888888889	0.00144928060321092 \\
0.898989898989899	0.000933427688394772 \\
0.909090909090909	0.000571552543433995 \\
0.919191919191919	0.000328798258053098 \\
0.929292929292929	0.000174775457395526 \\
0.939393939393939	8.37786888041592e-05 \\
0.94949494949495	3.48732501009863e-05 \\
0.95959595959596	1.18327888856129e-05 \\
0.96969696969697	2.90682889669824e-06 \\
0.97979797979798	3.96161240701076e-07 \\
0.98989898989899	1.28090533735303e-08 \\
1	0 \\
};

\end{axis}

\end{tikzpicture}}
\caption{An example of the window function \Cref{equ:window} for $a=1$. The function behaves like $1-\mathcal{O}(r^4)$ near the origin.}
\label{fig:windowfunc}
\end{figure}

Due to symmetry, the second part of \Cref{equ:111}is
\begin{equation}\label{equ:delta}
    \int_{\mathbb{R}^2}  \frac{\rho(|\bx-\by|)g_\bx(\by)}{|\bx-\by|^{2+2s}} d\by = \frac{1}{2}\Delta u(\bx) 
    \int_{\mathbb{R}^2} \frac{\rho(|\by|)\by_1^2}{|\by|^{2+2s}}d\by
\end{equation}

\subsection{Numerical Discretization}

We now adopt a standard numerical quadrature rule for computing \Cref{equ:first}. The idea is that for a function in 2D we can express it in the polar coordinates
\begin{equation}
    \int_{\mathbb{R}^2} f(\bx) d\bx = \int_0^\infty dr \int_{\mathbb{S}^1} rf(r\bm{\sigma})d\bm{\sigma}
\end{equation}
where $\mathbb{S}^1$ is the unit circle. The numerical quadrature is constructed from the tensor product of the Gauss Legendre quadrature rule in the radial direction and the trapezoidal rule in the axial direction, i.e.,
\begin{equation}\label{equ:quad}
    \bm{\xi}_{ij} = r_i\bm{\sigma}_j, \qquad
    w^{Q}_{ij} = \frac{2\pi w^{GL}_j r_i}{m}, \;\; i=1,2,\ldots,n; ~j=1,2,\ldots,m
\end{equation}
where $(r_i,w_{ij}^{GL})$ are Gauss Legendre quadrature points and weights in $[0,R]$ for a sufficient large $R$. Therefore, \Cref{equ:first} can then be discretized as 
\begin{equation*}
    \sum\limits_{i = 1}^n {\sum\limits_{j = 1}^m {\frac{2\pi w_{ij}^{GL}}{m}{{u({\bx}) - u({\bx} + {\bm{\xi} _{ij}}) + {{{\rho _i}r_i^2\Delta u({\bx})} \over 2}} \over {r_i^{1 + 2s}}}} }
\end{equation*}
here $\rho_{i} = \rho(r_i)$. For the Laplacian operator $\Delta$, we consider the fourth order discretization~\cite{gibou2005fourth}
\begin{equation}\label{equ:du}
    \Delta u(\bx) \approx  \Delta_h u(\bx) =\sum\limits_{i = 1}^2 {{{ - {1 \over {12}}u(\bx - 2{\mathbf{e}_i}h) + {4 \over 3}u(\bx - {\mathbf{e}_i}h) - {5 \over 2}u(\bx) + {4 \over 3}u(\bx + {\mathbf{e}_i}h) - {1 \over {12}}u(\bx + 2{\mathbf{e}_i}h)} \over {{h^2}}}} 
\end{equation}
here $\mathbf{e}_1=\begin{bmatrix}
    1\\
    0
\end{bmatrix}$, $\mathbf{e}_2=\begin{bmatrix}
    0\\
    1
\end{bmatrix}$. Note we require $u(\bx)\in \mathcal{C}^4$ to obtain the fourth order estimate of the discretization \Cref{equ:du}.

Define 
\[
A = 2\pi \sum\limits_{i = 1}^n {{{ {w_{ij}^{GL}}{}} \over {r_i^{1 + 2s}}}}, \qquad
B = \sum\limits_{i = 1}^n { {{{\pi {w_{ij}^{GL}}{\rho _i}} \over {r_i^{ 2s-1}}}} }  - \pi \int_0^a {{{\rho (r)} \over {{r^{2s - 1}}}}dr} 
\]
then the discretization of the fractional Laplacian operator is given as
\begin{equation}\label{equ:discretize}
(-\Delta)^su(\bx) \approx    (-\Delta)^s_hu(\bx) = c_{s, 2} \left(Au(\bx) - \sum\limits_{i = 1}^n {\sum\limits_{j = 1}^m {{{2\pi {w_{ij}^{GL}}} \over {mr_i^{1 + 2s}}}} } u(\bx+\bm{\xi}_{ij}) + B\Delta_h u(\bx)\right)
\end{equation}

We pick a sufficient large $R$ so that the computational domain $\Omega$ is contained in the convex hull of the numerical quadrature points. Only the terms such that $\bm{x}+\bm{\xi}_{ij}\in\Omega$ do not vanish in  the second summation \Cref{equ:discretize}. \Cref{fig:gaussquad} depicts such an example where $\Omega$ is the unit disk and $\bx+\bm{\xi}_{ij}$ corresponds to the non-vanishing terms for two $\bx$'s are shown.

\begin{figure}[htbp]
\centering
\scalebox{0.8}{\input{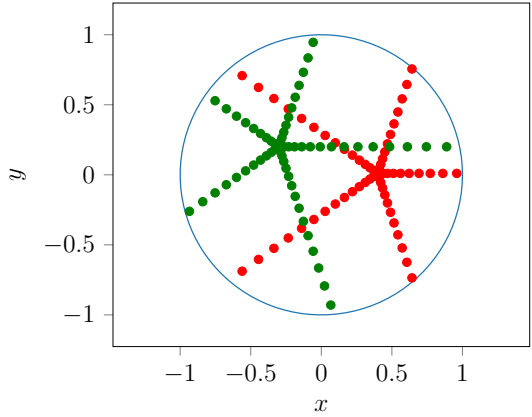}}
\caption{An example of quadrature points used in evaluating the second term in \Cref{equ:discretize}.}
\label{fig:gaussquad}
\end{figure}

\subsection{Interpolation Matrix and the Discrete Fractional Laplacian Matrix}
\label{sect:im}
Let us consider any point $\bx\in \Omega$, then there exists $\bu=\begin{bmatrix}
    u\\
    v
\end{bmatrix}$ in the parameter space such that $\mathbf{F}(\bu) = \bx$. The numerical solution $u_h\in\mathbf{V}$ can be written as
\begin{equation}\label{equ:uh}
    u_h(\bx) = u_h(\mathbf{F}^{-1}(\bu)) :=  \sum_{i=1}^{l_u-p}\sum_{j=1}^{l_v-q} N_{i,j}(u,v)c_{ij}
\end{equation}

 Since every NURBS basis function only spans $pq$ patches in the parameter space, no more than $pq$ terms in the summation in \Cref{equ:uh} are nonzero. If $\bx$ are the collocation points derived from Greville abscissae, we have
 \begin{equation}
     u_h(\hat\bx) = \mathbf{M}\mathbf{c}
 \end{equation}
 where $\hat\bx=\texttt{vec}(\{\hat \bx_{ij}\}_{i=1,2,\ldots, l_u-p;~j=1,2,\ldots, l_v-q})$, $\mathbf{c}=\texttt{vec}(\{c_{ij}\})$ and $\mathbf{M}$ is a sparse matrix with at most $pq$ non-zeros per row. Here \texttt{vec} is the vectorization of a matrix, 
 \begin{equation*}
     \texttt{vec}(\{\mathbf{x}_{ij}\}_{ij}) = \begin{bmatrix}
         \bx_{11}, \ldots, \bx_{l_u-p,1}, \bx_{12}, \ldots, \bx_{l_u-p,2}, \ldots, \bx_{1,l_v-q}, \ldots, \bx_{l_u-p, l_v-q}
     \end{bmatrix}^T
 \end{equation*}
 
 We call $\mathbf{M}$ the \textit{interpolation matrix}, which maps the coefficients from the parameter space to pointwise values in the physical space and vice versa. 
 
To compute \Cref{equ:discretize}, assume that $u(\bx)$ is given in terms of the coefficients $\mathbf{c}$ in the parameter space, then we can evaluate the values of $u$ at $\bx$, $\bx+\bm{\xi}_{ij}$, $\bx\pm h\mathbf{e}_i$, and $\bx\pm 2h\mathbf{e}_i$ using \Cref{equ:uh}. The values are computed and plugged into \Cref{equ:discretize} to obtain the values $(-\Delta)^s_h u(\bx)$, which is linear in the coefficients $\bc$. If the collocation points are $\{\hat \bx_{ij}\}$, we denote the associated linear operator that can be expressed by a matrix $\mathbf{L}$, and then we have
\begin{equation}
    (-\Delta)_h^s u(\hat\bx) = \mathbf{L}\mathbf{c}
\end{equation}

The interpolation matrix  $\mathbf{M}$ and the discrete fractional Laplacian matrix $\mathbf{L}$ can be precomputed. \Cref{alg:poisson} shows the algorithm for solving the Poisson equation using the matrices $\mathbf{M}$ and $\mathbf{L}$ mentioned above. 

\begin{algorithm}[htpb]
\begin{algorithmic}[1]
\STATE \textbf{Input}: $f$ \textbf{\;\;Output}: $u_h(\hat\bx_{ij})$ 
\STATE Precompute $\mathbf{L}$, $\mathbf{M}$
\STATE $\mathbf{f}\gets \texttt{vec}(\{\mathbf{f}_{ij}\}_{ij})$ where $\mathbf{f}_{ij} = f(\hat\bx_{ij})$
\STATE $\mathbf{f}[\mathcal{I}_D]\gets 0$ $\qquad$\COMMENT{Recall that $\mathcal{I}_D$ are the degrees of freedom associated with boundaries.}
\STATE $\mathbf{L}[\mathcal{I}_D,:]=\mathbf{M}[\mathcal{I}_D,:]$$\qquad$\COMMENT{Impose the zero boundary conditions.}
\STATE $\mathbf{c}\gets \mathbf{L}^{-1}\mathbf{f}$
\RETURN $\mathbf{M}\bc$
\end{algorithmic}
\caption{Solving the fractional Poisson problem using the interpolation matrix and the discrete fractional Laplacian matrix.}
\label{alg:poisson}
\end{algorithm}

\subsection{Convergence and Error Bounds}

The choice $m$ and $n$ in the numerical quadrature rule \Cref{equ:quad} as well the choice of step size $h$ in \Cref{equ:du}   affects the accuracy of the numerical scheme. As we show in the numerical examples, these parameters will influence the accuracy  while having little impact on the convergence rate. 

The approximation ability depends on the number of knots, the degrees, and the mesh structure of the NURBS surface. It is shown that under appropriate assumptions, the NURBS space on the physical domain delivers the optimal rate of the convergence, similar to the finite element space of degree $p$~\cite{bazilevs2006isogeometric}. The following lemma gives the global error estimate
\begin{lemma}\label{thm:1}
    Let $k$ and $l$ be integer indices with $0\leq k\leq l\leq p+1$, we have
    \begin{equation*}
        \sum_{K\in \mathcal{K}_h} \|v-\Pi_{\mathcal{V}_h}v\|^2_{\mathcal{H}_h^k(K)} \leq C_{\mathrm{shape}}
        \sum_{K\in \mathcal{K}_h} h_K^{2(l-k)} \sum_{i=0}^l \|\nabla F \|^{2(i-l)}_{L^\infty(F^{-1}(K))}|v|^2_{H^i(K)}
        \qquad \forall v \in H^l(\Omega)
    \end{equation*}
\end{lemma}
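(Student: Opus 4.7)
The plan is to follow the standard strategy for NURBS approximation error estimates, reducing to the parameter domain where polynomial-based estimates apply, and then transferring back to the physical domain through the geometry map $F$.

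First, I would define the projector $\Pi_{\mathcal{V}_h}$ as the usual quasi-interpolant built from the dual basis associated with the NURBS basis (cf.\ Bazilevs et al., which this lemma is taken from). The crucial property is local polynomial reproduction on each Bézier element in the parameter domain: on the pre-image $\hat{K} = F^{-1}(K)$, the operator $\hat\Pi$ reproduces polynomials of degree up to $p$, and the pullback $\hat v = v \circ F$ satisfies $\widehat{\Pi_{\mathcal{V}_h} v} = \hat\Pi \hat v$ modulo the weight function built into the NURBS space. After factoring out the weight, the analysis reduces to bounding $\|\hat v - \hat\Pi \hat v\|_{H^k(\hat K)}$.

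Next, I would apply the Bramble--Hilbert lemma on each Bézier element $\hat K$ in the parameter domain. A standard scaling argument to a reference element of unit size, combined with the fact that $\hat\Pi$ is bounded and reproduces polynomials of degree $l-1$ for any $l \leq p+1$, yields the local estimate
\begin{equation*}
|\hat v - \hat\Pi \hat v|_{H^k(\hat K)} \;\leq\; C\, \hat h_K^{\,l-k}\, |\hat v|_{H^l(\hat K)}, \qquad 0\leq k\leq l\leq p+1,
\end{equation*}
where $\hat h_K$ is the diameter of $\hat K$ and $C$ depends only on the shape regularity of $\hat K$ and on the NURBS degree/weights.

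The third step is to transfer this bound to the physical element $K = F(\hat K)$. Using the chain rule, derivatives of order $k$ in the physical variable are controlled by derivatives of the pullback multiplied by powers of $\|\nabla F^{-1}\|_{L^\infty}$, and conversely seminorms of order $i$ of the pullback are bounded by $\|\nabla F\|_{L^\infty(\hat K)}^{i}$ times the corresponding physical seminorms up to order $i$. After changing variables (the Jacobian determinant contributes a factor absorbed into $h_K$ via shape regularity), one obtains, on each element,
\begin{equation*}
\|v - \Pi_{\mathcal{V}_h} v\|_{\mathcal{H}_h^k(K)}^2 \;\leq\; C_{\mathrm{shape}}\, h_K^{2(l-k)} \sum_{i=0}^{l} \|\nabla F\|_{L^\infty(F^{-1}(K))}^{2(i-l)} |v|_{H^i(K)}^2 .
\end{equation*}
Summing over $K \in \mathcal{K}_h$ yields the claimed global estimate.

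The main obstacle, and the part that requires the most care, is the third step: tracking how the chain-rule expansion of mixed derivatives of order up to $l$ distributes between the Jacobian norm $\|\nabla F\|$ and the physical seminorms $|v|_{H^i(K)}$, and verifying that the shape-regularity assumption lets us lump the Jacobian-determinant contributions and the higher derivatives of $F$ into a single constant $C_{\mathrm{shape}}$ independent of $h$. One must also confirm that the NURBS weight function, which is strictly positive and uniformly bounded above and below on each patch, does not introduce any $h$-dependent factor — this is precisely where the quasi-interpolant construction of Bazilevs et al.\ is used, and the argument carries over verbatim in our two-dimensional setting.
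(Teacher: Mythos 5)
The paper does not actually prove this lemma: it is quoted verbatim from Bazilevs et al.\ and the text simply refers the reader to that work. Your outline — build the NURBS quasi-interpolant from the spline projector with the weight factored out, apply a Bramble--Hilbert/scaling argument on each element of the parameter domain using polynomial reproduction up to degree $p$, and then push the estimate forward through $F$ with chain-rule bookkeeping that produces the $\|\nabla F\|_{L^\infty(F^{-1}(K))}^{2(i-l)}$ factors and the constant $C_{\mathrm{shape}}$ — is precisely the strategy of that cited reference, so you are on the same route as the paper's (delegated) proof. The only caveat is that your third step is where all the real work lives: the left-hand norm $\mathcal{H}^k_h(K)$ is a mesh-dependent ``bent Sobolev'' norm whose definition has to be in place before the chain-rule expansion can be organized so that exactly the powers $2(i-l)$ of $\|\nabla F\|$ appear and the higher derivatives of $F$ and the weight are absorbed into $C_{\mathrm{shape}}$; your sketch acknowledges but does not carry out this bookkeeping, which is acceptable given that the paper itself provides no more detail than the citation.
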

For the proof the readers are referred to~\cite{bazilevs2006isogeometric}. Here $\mathcal{K}_h$ are the patches in the parametric space, $\mathcal{V}_h$ is the NURBS space, $C_{\mathrm{shape}}$ is a constant depending on the structure of the NURBS surface, $H^i$ is the standard Sobolev space, $h_K$ is the diameter of $K$, and $\mathcal{H}^k_h(K)$ are patches in the physical space endowed with its own norms. The lemma indicates that if $v$ is sufficiently smooth, increasing the degree of the NURBS surface~($p$-refinement) or the number of control points~($h$-refinement) will yield better approximations. However, it may not be the case for less smooth functions, where are often encountered in the PDEs involving the fractional Laplacian~\cite{ros2014dirichlet}. 

We assume that the domain $\Omega$ satisfies the exterior ball condition, i.e., there exists a positive radium $\rho_0$ such that all the points on $\partial \Omega$ can be touched by some exterior ball of radius $\rho_0$. The solution to the fractional Poisson equation (\Cref{equ:modelproblem}) with $\mathcal{L}=(-\Delta)^s$ is only H\"older continuous according to Corollary 1.6 in~\cite{ros2014dirichlet}.
\begin{lemma}\label{thm:v-is-Calpha}
Let $\Omega$ be a bounded $C^{1,1}$ domain satisfying the exterior ball condition, $f\in L^\infty(\Omega)$, $u$ be a solution of \Cref{equ:modelproblem} with $\mathcal{L}=(-\Delta)^s$. Then, 
\[
    u \in C^{0,s}(\mathbb{R}^2) \quad \text{and} \quad
    \|u\|_{C^{0,s}} \leq C\|f\|_{L^\infty(\Omega)}
\]
where $C$ is a constant depending only on $\Omega$ and $s$.
\end{lemma}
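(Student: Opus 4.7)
The plan is to follow the strategy of Ros-Oton and Serra \cite{ros2014dirichlet}, combining a sharp boundary decay estimate with an interior Hölder bound through a case analysis on the distance between two points and their distance to $\partial\Omega$. Since $u \equiv 0$ on $\Omega^c$, the global $C^{0,s}(\mathbb{R}^2)$ claim reduces to controlling $u$ near $\partial\Omega$ together with interior smoothness.

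First, I would build a pointwise barrier from the exterior ball condition. The key ingredient is Getoor's classical identity $(-\Delta)^s (1-|x|^2)_+^s \equiv \kappa_{s,2}$ on $B_1$. Rescaling this torsion solution to radius $\rho_0$, translating its center to that of an exterior ball tangent at a boundary point, and invoking the comparison principle for $(-\Delta)^s$ yields
\begin{equation*}
|u(x)| \leq C_1\|f\|_{L^\infty(\Omega)}\, d(x,\partial\Omega)^s \quad \text{for all } x \in \mathbb{R}^2,
\end{equation*}
and in particular $\|u\|_{L^\infty(\mathbb{R}^2)} \leq C\|f\|_{L^\infty(\Omega)}$.

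Next, I would invoke Silvestre's interior Hölder estimate: for $(-\Delta)^s v = g \in L^\infty$ in $B_1$, one has $v \in C^\alpha(B_{1/4})$ with some exponent $\alpha \in (s, \min(2s,1))$, controlled by $\|v\|_{L^\infty(B_1)}$, a tail integral $\int_{\mathbb{R}^2 \setminus B_1} |v(z)|/|z|^{2+2s}\,dz$, and $\|g\|_{L^\infty(B_1)}$. For $x_0 \in \Omega$ at distance $d := d(x_0,\partial\Omega)$, I rescale $\tilde u(z) := u(x_0 + dz)$ on $B_1$, which satisfies $(-\Delta)^s \tilde u(z) = d^{2s} f(x_0+dz)$. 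The boundary decay gives $\|\tilde u\|_{L^\infty(B_1)} \leq C\|f\|_{L^\infty} d^s$, and because $d(w,\partial\Omega) \leq 2|w-x_0|$ for $|w-x_0| \geq d$ with $w \in \Omega$, the tail integral is also bounded by $C\|f\|_{L^\infty} d^s$. Unscaling yields
\begin{equation*}
[u]_{C^\alpha(B_{d/4}(x_0))} \leq C\|f\|_{L^\infty(\Omega)}\, d^{\,s-\alpha}.
\end{equation*}

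Finally, I would combine these via a case split on $r := |x-y|$ and $d := \max\{d(x,\partial\Omega), d(y,\partial\Omega)\}$. If one point lies outside $\Omega$ or $r \geq d/4$, the boundary decay alone gives $|u(x)-u(y)| \leq C\|f\|_{L^\infty} r^s$. If $r < d/4$, the interior estimate gives $|u(x)-u(y)| \leq C\|f\|_{L^\infty}\, d^s (r/d)^\alpha$, and since $\alpha \geq s$ and $r/d < 1$, $(r/d)^\alpha \leq (r/d)^s$, hence $|u(x)-u(y)| \leq C\|f\|_{L^\infty} r^s$. The main obstacle is the boundary-decay step: Getoor's explicit formula and the exterior ball condition make the barrier tractable, but applying the nonlocal comparison principle uniformly across $\partial\Omega$ requires care with the global extension of $u$ outside $\Omega$. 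A secondary subtlety is that the tail integral in the rescaled interior estimate must be bounded using the decay $|u| \leq C\|f\|_{L^\infty} d(\cdot,\partial\Omega)^s$ rather than the naive $L^\infty$ bound; otherwise one would only recover exponent $\alpha$ and not the optimal $s$ in the global bound.
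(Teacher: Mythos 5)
Your proposal is correct: the barrier built from Getoor's torsion function and the exterior ball condition, the rescaled Silvestre-type interior H\"older estimate with the tail controlled by the $\delta^s$ decay, and the final case split on $|x-y|$ versus the distance to $\partial\Omega$ is precisely the argument of Proposition 1.1 in Ros-Oton and Serra~\cite{ros2014dirichlet}. The paper itself offers no proof of this lemma---it simply cites that reference---so your sketch reconstructs the proof of the cited result rather than diverging from anything done here.
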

The lemma indicates that the solution is H\"older continuous. In fact, it is also shown in \cite{ros2014dirichlet} that $\frac{u}{\delta^s}$ is continuous in $\bar\Omega$, i.e., the solution is continuous but not continuously differentiable near the boundary. For the case $s>\frac{1}{2}$, we have the following estimate
\begin{theorem}\label{theorem:main}
    Assume that $\Omega$ is a bounded $C^{1,1}$ domain satisfying the exterior ball condition, $f\in L^\infty(\Omega)$,  $u$ be a solution of \Cref{equ:modelproblem} with $\mathcal{L}=(-\Delta)^s$, $s>\frac{1}{2}$. In addition, assume that the mesh is quasi-uniform, i.e., there exists $\gamma_1\leq 1 \leq \gamma_2$, such that
    \begin{equation}
        \gamma_1 h \leq h_K \leq \gamma_2 h \quad \forall K \in \mathcal{K}_h
    \end{equation}
     then there exists $C>0$ such that
    \begin{equation}
        \sum_{K\in \mathcal{K}_h} \|u-\Pi_{\mathcal{V}_h}u\|^2_{L^2(\Omega)} \lesssim h^2
    \end{equation} 
\end{theorem}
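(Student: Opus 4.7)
The plan is to invoke \Cref{thm:1} with $k=0$ and $l=1$, for which the crucial input is showing that $u\in H^1(\Omega)$ under the hypothesis $s>\tfrac{1}{2}$. \Cref{thm:v-is-Calpha} alone is not enough, since global $C^{0,s}$ regularity (with $s<1$) does not embed into $H^1$. I would therefore first invoke the sharper boundary regularity from \cite{ros2014dirichlet}, namely that $u/\delta^s\in C^{\alpha}(\bar\Omega)$ for some $\alpha>0$, where $\delta(\bx)=\mathrm{dist}(\bx,\partial\Omega)$. Combined with interior $C^\infty$ smoothness this yields the pointwise bound $|\nabla u(\bx)|\lesssim \delta(\bx)^{s-1}$ in a neighborhood of $\partial\Omega$, so
\begin{equation*}
    \int_\Omega |\nabla u(\bx)|^2\, d\bx \;\lesssim\; \int_\Omega \delta(\bx)^{2s-2}\, d\bx,
\end{equation*}
which is finite precisely when $2s-2>-1$, i.e.\ $s>\tfrac12$. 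This is where the hypothesis $s>\tfrac{1}{2}$ enters in an essential way and explains its necessity in the statement.

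Once $u\in H^1(\Omega)$ is established, I would apply \Cref{thm:1} with $v=u$, $k=0$, $l=1$ (assuming $p\geq 1$, so the condition $0\le k\le l\le p+1$ holds), obtaining
\begin{equation*}
    \sum_{K\in \mathcal{K}_h} \|u-\Pi_{\mathcal{V}_h}u\|^2_{L^2(K)} \;\leq\; C_{\mathrm{shape}}\sum_{K\in\mathcal{K}_h} h_K^{2}\sum_{i=0}^{1}\|\nabla F\|^{2(i-1)}_{L^\infty(F^{-1}(K))}\, |u|^2_{H^i(K)}.
\end{equation*}
The quasi-uniformity assumption lets me replace $h_K^2$ by $\gamma_2^2 h^2$ uniformly and pull it out of the sum. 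The Jacobian factors $\|\nabla F\|^{2(i-1)}$ are bounded above and below by constants depending only on the fixed NURBS parameterization of the $C^{1,1}$ domain $\Omega$ (which is assumed non-degenerate, so $\nabla F$ and $\nabla F^{-1}$ are bounded in $L^\infty$).

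It then remains to assemble $\sum_{K}|u|^2_{H^i(K)}\leq \|u\|^2_{H^i(\Omega)}$ for $i=0,1$, both of which are finite by the first step, to conclude
\begin{equation*}
    \sum_{K\in\mathcal{K}_h}\|u-\Pi_{\mathcal{V}_h}u\|^2_{L^2(K)} \;\lesssim\; h^2\bigl(\|u\|^2_{L^2(\Omega)}+|u|^2_{H^1(\Omega)}\bigr) \;\lesssim\; h^2.
\end{equation*}

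The genuinely nontrivial step is the first one: extracting $H^1$ membership from the Hölder-type regularity given by \Cref{thm:v-is-Calpha}. Everything else (invoking \Cref{thm:1}, absorbing $h_K$ into $h$ via quasi-uniformity, and bounding the geometric factors $\|\nabla F\|$) is essentially bookkeeping. A secondary point worth verifying is that the NURBS degrees satisfy $p\geq 1$ so that $l=1$ is admissible, and that the $L^\infty$ bound on $\nabla F$ does not degrade near boundary patches — both follow from the standing assumption that $\mathbf{F}$ is a regular $C^{1,1}$ parameterization of $\Omega$.
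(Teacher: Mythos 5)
Your proof is correct, but it reaches the key intermediate fact $u\in H^1(\Omega)$ by a genuinely different route than the paper. The paper's own proof goes directly from the $C^{0,s}$ bound of \Cref{thm:v-is-Calpha} to $\|u\|_{H^1}\leq C\|u\|_{C^{0,s}(\Omega)}$ by invoking the embedding statement \Cref{thm:helper} with $n=p=2$, $m=1$, $k=0$, $\alpha=s$, and then applies \Cref{thm:1} with $l=1$, $k=0$ exactly as you do. You instead reject that shortcut on the grounds that global $C^{0,s}$ regularity with $s<1$ does not control $H^1$, and you substitute the finer boundary regularity of \cite{ros2014dirichlet} ($u/\delta^s\in C^\alpha(\bar\Omega)$ together with interior estimates) to obtain $|\nabla u|\lesssim \delta^{s-1}$ and hence $\int_\Omega|\nabla u|^2\lesssim\int_\Omega\delta^{2s-2}<\infty$ precisely when $s>\tfrac12$. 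This buys something real: in the paper's argument the hypothesis of \Cref{thm:helper}, $m-\tfrac{n}{p}\leq k+\alpha$, reads $0\leq s$ and is satisfied for every $s\in(0,1)$, so the restriction $s>\tfrac12$ never visibly enters, whereas your derivation makes the role of $s>\tfrac12$ explicit and supplies the analytic substance that the Hölder-to-Sobolev step is silently relying on; the price is that you need the weighted boundary estimate and an interior gradient bound rather than a one-line lemma citation. One small wording caveat: interior regularity for $f\in L^\infty$ is not $C^\infty$ but only $C^{2s-\epsilon}_{\mathrm{loc}}$; since $s>\tfrac12$ this still gives the needed interior $C^1$ control and the scaled estimate $|\nabla u(\bx)|\lesssim\delta(\bx)^{s-1}\|f\|_{L^\infty}$, so the argument stands. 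The concluding step --- quasi-uniformity to replace $h_K$ by $h$, boundedness of $\|\nabla F\|$, and summing $|u|^2_{H^i(K)}$ over patches --- coincides with the paper's.
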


To prove \Cref{theorem:main}, we need the following embedding lemma~\cite{slidespd48:online}
\begin{lemma}\label{thm:helper}
    Assume that $\Omega$ is a bounded $C^{1,1}$ domain, $m$, $k\in\mathbb{N}_0$, $p\in [1,\infty)$, $\alpha\in [0,1]$. 
    \begin{equation}
        m -\frac{n}{p} \leq k+\alpha \quad \alpha\neq 0, 1
    \end{equation}
    then $W^{m,p}(\Omega)\subset C^{k,\alpha}(\bar \Omega)$ and there is a constant $C>0$, s.t.
    \begin{equation}
        \|u\|_{W^{m,p}(\Omega)} \leq C\|u\|_{C^{k,\alpha}(\Omega)}
    \end{equation}
    If $m -\frac{n}{p} < k+\alpha$, the embedding is compact. 
\end{lemma}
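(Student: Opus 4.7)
The plan is to identify the statement as the classical Sobolev--Morrey embedding (plus Rellich--Kondrachov compactness) and to reduce it to a small number of well-known building blocks: an extension operator for $C^{1,1}$ domains, Morrey's inequality on $\mathbb{R}^n$, and the Arzelà--Ascoli theorem. I read the norm inequality as the standard Morrey direction $\|u\|_{C^{k,\alpha}(\bar\Omega)}\le C\|u\|_{W^{m,p}(\Omega)}$, which is the only direction consistent with the stated inclusion $W^{m,p}(\Omega)\subset C^{k,\alpha}(\bar\Omega)$; the proof proposal below targets that estimate.

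First I would reduce the problem from $\Omega$ to $\mathbb{R}^n$. Because $\Omega$ is bounded and $C^{1,1}$, Stein's extension theorem (or Calderón's) yields a bounded linear operator $E\colon W^{m,p}(\Omega)\to W^{m,p}(\mathbb{R}^n)$ with compact support and $\|Eu\|_{W^{m,p}(\mathbb{R}^n)}\le C_\Omega\|u\|_{W^{m,p}(\Omega)}$. All subsequent estimates are performed on $Eu$ and restricted at the end. Second, I would establish the base case $m=1$, $k=0$, $p>n$, $\alpha=1-n/p$: for $u\in C_c^\infty(\mathbb{R}^n)$, the mean-value representation
\begin{equation*}
u(x)-u(y) = \int_0^1 \nabla u(y+t(x-y))\cdot(x-y)\,dt
\end{equation*}
combined with Hölder's inequality on balls of radius $|x-y|$ produces Morrey's pointwise estimate $|u(x)-u(y)|\le C|x-y|^{\alpha}\|\nabla u\|_{L^p}$; density of $C_c^\infty$ in $W^{1,p}$ extends it. Together with the $L^\infty$ bound obtained by averaging over a fixed ball, this gives $\|u\|_{C^{0,\alpha}(\bar\Omega)}\le C\|u\|_{W^{1,p}(\Omega)}$.

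Third, I would bootstrap to arbitrary $(m,k,p,\alpha)$ satisfying $m-n/p\le k+\alpha$, $\alpha\in(0,1)$. The idea is iteration: if $m-n/p$ exceeds $k+\alpha$ by at least one, one first invokes the Gagliardo--Nirenberg--Sobolev embedding $W^{j,p}\hookrightarrow W^{j-1,p^*}$ with $1/p^*=1/p-1/n$ to trade one derivative for a higher integrability index, until the remaining Sobolev index sits in the Morrey regime $p>n$. Then I apply the base case to the $k$-th weak derivatives $D^\beta u$ for $|\beta|=k$ to obtain $\|D^\beta u\|_{C^{0,\alpha}(\bar\Omega)}\le C\|u\|_{W^{m,p}(\Omega)}$, which is exactly $\|u\|_{C^{k,\alpha}(\bar\Omega)}\le C\|u\|_{W^{m,p}(\Omega)}$.

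For the compactness claim under the strict inequality $m-n/p<k+\alpha$, I would choose $\alpha'\in(\alpha,1)$ still satisfying $m-n/p\le k+\alpha'$ and apply the already-proved embedding to get $W^{m,p}(\Omega)\hookrightarrow C^{k,\alpha'}(\bar\Omega)$. A bounded family in $C^{k,\alpha'}$ has uniformly bounded and uniformly Hölder-continuous derivatives up to order $k$; Arzelà--Ascoli yields a subsequence converging in $C^k(\bar\Omega)$, and the gap $\alpha'>\alpha$ together with the interpolation inequality
\begin{equation*}
[\varphi]_{C^{0,\alpha}}\le 2\|\varphi\|_{L^\infty}^{1-\alpha/\alpha'}[\varphi]_{C^{0,\alpha'}}^{\alpha/\alpha'}
\end{equation*}
upgrades convergence from $C^k$ to $C^{k,\alpha}$, establishing pre-compactness.

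The main obstacle, and the only step that really requires care, is the boundary extension and the bookkeeping in the iteration: one must verify that each intermediate Lebesgue/Sobolev index lies in the admissible range so that Gagliardo--Nirenberg--Sobolev can be chained until the Morrey regime is reached, and that the extension operator is available at every Sobolev level used (this is where the $C^{1,1}$ hypothesis is critical). The interior inequalities are standard; tying them to the domain requires exactly that boundary regularity.
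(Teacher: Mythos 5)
The first thing to note is that the paper does not actually prove this lemma: it is imported verbatim from the cited reference, so there is no internal argument to compare yours against. What can be assessed is whether your proposal establishes the statement the paper states and uses, and there is a genuine mismatch. You resolve the (real) inconsistency in the printed statement by flipping the norm inequality to the Morrey direction $\|u\|_{C^{k,\alpha}(\bar\Omega)}\le C\|u\|_{W^{m,p}(\Omega)}$, and your sketch (extension operator, Morrey's pointwise estimate, Gagliardo--Nirenberg iteration, Arzel\`a--Ascoli) is a correct outline of that classical theorem. But that theorem carries the hypothesis $m-\tfrac{n}{p}\ge k+\alpha$, whereas the lemma assumes $m-\tfrac{n}{p}\le k+\alpha$. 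Under the stated hypothesis your reading is simply false: with $n=p=2$, $m=1$, $k=0$, $\alpha=s$ one has $m-n/p=0\le s$, yet $H^1(\Omega)\not\subset C^{0,s}(\bar\Omega)$ (the function $\log\log(1/|x|)$ lies in $H^1$ near the origin in two dimensions but is unbounded). Concretely, your bootstrapping step ``iterate Gagliardo--Nirenberg until the remaining index sits in the Morrey regime $p>n$'' can never be carried out under the lemma's hypothesis; the inequality points the wrong way for the iteration to reach that regime.

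More importantly, the direction you discarded is the one the paper uses. In the proof of \Cref{theorem:main} the lemma is invoked to conclude $\|u\|_{H^1}\le C\|u\|_{C^{0,s}(\Omega)}$, i.e., the embedding $C^{k,\alpha}(\bar\Omega)\hookrightarrow W^{m,p}(\Omega)$ exactly as the inequality is printed, and your argument establishes nothing about that direction. (For what it is worth, that direction is also not a consequence of any standard embedding theorem once $m>k$: an $\alpha$-H\"older function with $\alpha<1$ need not possess a weak gradient in any $L^p$, so $C^{0,s}(\bar\Omega)\subset H^1(\Omega)$ fails; the smoothness-index heuristic $k+\alpha\ge m-n/p$ does not transfer to this direction because membership in $W^{m,p}$ demands genuine derivatives of order $m$.) The upshot is that your proposal is a competent proof of a different theorem; it neither matches the lemma's hypothesis nor supplies the inequality that the paper needs downstream, so the gap in the paper's argument at this point is not closed by it.
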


\begin{proof}[Proof of \Cref{theorem:main}]
    Due to \Cref{thm:v-is-Calpha}, the solution is $s$-H\"older continuous. Let $n=p=2$, $m=1$, $k=0$, $\alpha=s$ in \Cref{thm:helper}, we have 
$$\|u\|_{H^1}\leq C\|u\|_{C^{0,s}(\Omega)}$$
    for a constant $C>0$. We invoke \Cref{thm:1} with $l=1$, $k=0$, and thus have 
$$\sum_{K\in \mathcal{K}_h} \|u-\Pi_{\mathcal{V}_h}u\|^2_{L^2(\Omega)} \lesssim h^2$$
\end{proof}
Note for the case $s\leq \frac{1}{2}$ the proof is not applicable. Nevertheless, we observe numerically that we  obtain the $\mathcal{O}(h^2)$ convergence rate. 
     
Another source of error is the numerical error of solving the linear system $\mathbf{L}\mathbf{c}=\mathbf{f}$ in \Cref{alg:poisson}. The condition number of $\mathbf{L}$ grows as we increase the number of control points. The convergence of the iterative solver such as GMRES may be slow if no proper preconditioner is used. In fact, it is shown that for finite element methods, if a family of shape regular and globally quasi-uniform triangulations with maximal element size $h$ is used, the stiffness matrix satisfies~\cite{ainsworth2017towards}
$$\kappa(\mathbf{L}) = Ch^{-2s}$$
In the paper, we have used Greville abscissae as the collocation points, which have been widely adopted as the default choice in the isogeometric analysis literature~\cite{reali2015introduction}.

\section{Numerical Experiments}\label{sect:ne}

\subsection{Numerical Benchmark}

For verification and benchmarking we consider the generalized eigenvalue problem for $(-\Delta)^s$ in a unit disk $\Omega\subset \mathbb{R}^d$, with a zero condition in the complement of $\Omega$
\begin{equation}\label{equ:ben}
    \begin{cases}
        (-\Delta)^s \left((1-|\bx|^2)^s\varphi_n(\bx)\right) = \lambda_n \varphi_n(\bx) & \bx\in \Omega\\
        \varphi_n(\bx) = 0 & \bx\not\in\Omega
    \end{cases}
\end{equation}
where $n=0,1,\ldots$ The eigenvalues $\lambda_n$ and the eigenfunctions $\varphi_n(\bx)$ are given as~\cite{dyda2017eigenvalues}
\[
    \lambda_n = \frac{{{2^{2s}}\Gamma {{(s + n + 1)}^2}} }{ {{{(n!)}^2}} }\qquad
    \varphi_n(\bx) = {(-1)^n}P_n^{\left( {s,0} \right)}(2|\bx|^2 - 1)
\]
We assume the right hand side $\lambda_n \varphi_n(\bx)$ is given and we apply the proposed algorithm \Cref{alg:poisson} to obtain the numerical solution $u_h(\hat\bx_{ij})$. The error is computed using 
\begin{equation}\label{equ:error}
    \mathrm{error} = \sqrt{\frac{\sum_{i,j} \left|u_h(\hat\bx_{ij})-(1-|\hat\bx_{ij}|^2)^s\varphi_n(\hat\bx_{ij})\right|^2}{(l_u-p)(l_v-q)}}
\end{equation}
here $(1-|\hat\bx_{k}|^2)^s\varphi_n(\hat\bx_{k})$ is the exact solution at $\hat\bx_k$.

The numerical experiments are carried out with parameters $s=0.8$, $a=0.1$, $h=0.001$, $R=20$, $p=q=2$, $n=1$, $2$, $3$, $4$, $5$. In the first case we use $m=20$, $n=1000$ while in the second case we use $m=40$, $n=5000$ ($m$ and $n$ are the number of quadrature points in the axial and radial directions). \Cref{fig:1} shows the convergence results for \Cref{equ:ben}. The first column shows the reference solution. The second column shows the convergence of the algorithm with respect the degrees of freedom $N$ by varying the number of refinements for different number of quadrature points. We have obtained monotonous convergence and observed $\mathcal{O}(N^{-1})$ convergence for all cases, where $N$ is the degrees of freedom. $m$ and $n$ influence the final error while having little impact on the convergence order. Larger numbers of quadrature points yield more accurate results. 

Note that the exact solutions $(1-|\bx|^2)^s\varphi_n(\bx)$ are continuous but not $\mathcal{C}^1$ on the boundary $|\bx|=1$. However, we observe no difficulty in applying the proposed algorithm even though we have required $\mathcal{C}^3$ for the singularity subtraction and $\mathcal{C}^4$ for the fourth-order discretization of the Laplacian operator. This is because the collocation points derived from the Greville abscissae are usually denser near the boundary and therefore mitigate accuracy loss partially.  

\begin{figure}[htpb] 
\centering
\includegraphics[width=0.33\textwidth,keepaspectratio]{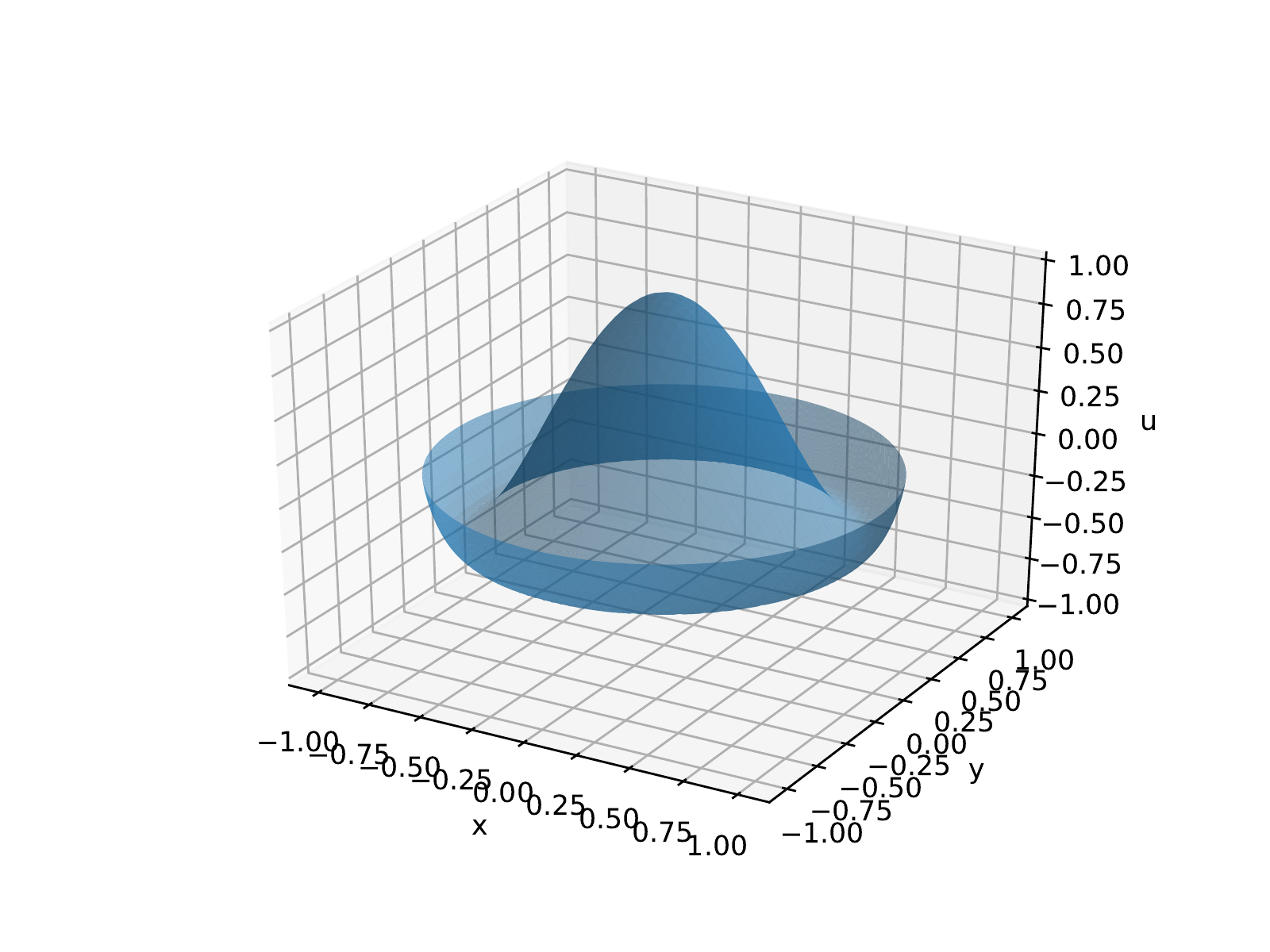}~
\scalebox{0.5}{
\begin{tikzpicture}

\definecolor{color0}{rgb}{0.12156862745098,0.466666666666667,0.705882352941177}
\definecolor{color1}{rgb}{1,0.498039215686275,0.0549019607843137}
\definecolor{color2}{rgb}{0.172549019607843,0.627450980392157,0.172549019607843}

\begin{axis}[
grid = both,
grid style={dashed,gray},
legend cell align={left},
legend entries={{$n=1000, m=20$},{$n=5000, m=40$},{$\mathcal{O}(N^{-1})$}},
legend style={draw=white!80.0!black},
tick align=outside,
tick pos=left,
x grid style={white!69.01960784313725!black},
xlabel={Degrees of freedom},
xmin=10.5560632861832, xmax=99334.0009028252,
xmode=log,
xtick={1,10,100,1000,10000,100000,1000000},
xticklabels={${10^{0}}$,${10^{1}}$,${10^{2}}$,${10^{3}}$,${10^{4}}$,${10^{5}}$,${10^{6}}$},
y grid style={white!69.01960784313725!black},
ylabel={Error},
ymin=0.000408727467761116, ymax=0.0849894054772691,
ymode=log,
ytick={1e-05,0.0001,0.001,0.01,0.1,1},
yticklabels={${10^{-5}}$,${10^{-4}}$,${10^{-3}}$,${10^{-2}}$,${10^{-1}}$,${10^{0}}$}
]
\addlegendimage{no markers, color0}
\addlegendimage{no markers, color1}
\addlegendimage{no markers, color2}
\addplot [very thick, color0, mark=+, mark size=3, mark options={solid}]
table [row sep=\\]{%
16	0.0666813360271643 \\
64	0.0212246498265138 \\
256	0.00695208588490713 \\
1024	0.00378648871048879 \\
4096	0.00343042478860083 \\
16384	0.00343012674368151 \\
65536	0.00344666191777939 \\
};
\addplot [very thick, color1, mark=triangle*, mark size=3, mark options={solid,rotate=90}]
table [row sep=\\]{%
16	0.0666647746020367 \\
64	0.0207973337117658 \\
256	0.00613583829457505 \\
1024	0.00210176876439697 \\
4096	0.00114790778341626 \\
16384	0.000974478627053965 \\
65536	0.000942320039038769 \\
};
\addplot [very thick, color2, dashed]
table [row sep=\\]{%
16	0.0333406680135821 \\
64	0.00833516700339553 \\
256	0.00208379175084888 \\
1024	0.000520947937712221 \\
};
\path [draw=black, fill opacity=0] (axis cs:0,0.000408727467761116)
--(axis cs:0,0.0849894054772691);

\path [draw=black, fill opacity=0] (axis cs:1,0.000408727467761116)
--(axis cs:1,0.0849894054772691);

\path [draw=black, fill opacity=0] (axis cs:10.5560632861832,0)
--(axis cs:99334.0009028252,0);

\path [draw=black, fill opacity=0] (axis cs:10.5560632861832,1)
--(axis cs:99334.0009028252,1);

\end{axis}

\end{tikzpicture}}
\includegraphics[width=0.33\textwidth,keepaspectratio]{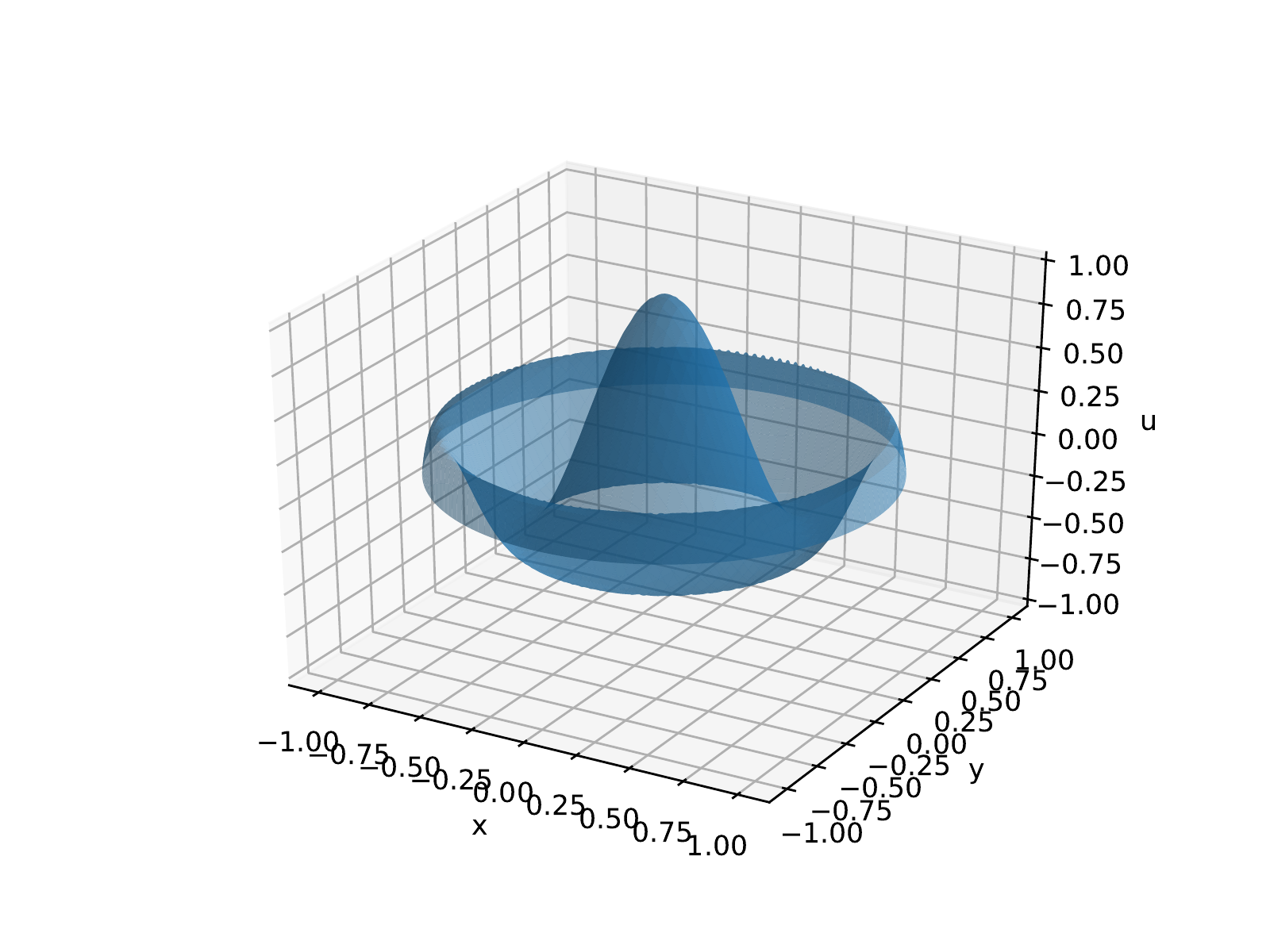}~
\scalebox{0.5}{
\begin{tikzpicture}

\definecolor{color0}{rgb}{0.12156862745098,0.466666666666667,0.705882352941177}
\definecolor{color1}{rgb}{1,0.498039215686275,0.0549019607843137}
\definecolor{color2}{rgb}{0.172549019607843,0.627450980392157,0.172549019607843}

\begin{axis}[
grid = both,
grid style={dashed,gray},
legend cell align={left},
legend entries={{$n=1000, m=20$},{$n=5000, m=40$},{$\mathcal{O}(N^{-1})$}},
legend style={draw=white!80.0!black},
tick align=outside,
tick pos=left,
x grid style={white!69.01960784313725!black},
xlabel={Degrees of freedom},
xmin=10.5560632861832, xmax=99334.0009028252,
xmode=log,
xtick={1,10,100,1000,10000,100000,1000000},
xticklabels={${10^{0}}$,${10^{1}}$,${10^{2}}$,${10^{3}}$,${10^{4}}$,${10^{5}}$,${10^{6}}$},
y grid style={white!69.01960784313725!black},
ylabel={Error},
ymin=0.000434346659427115, ymax=0.0918896785865606,
ymode=log,
ytick={1e-05,0.0001,0.001,0.01,0.1,1},
yticklabels={${10^{-5}}$,${10^{-4}}$,${10^{-3}}$,${10^{-2}}$,${10^{-1}}$,${10^{0}}$}
]
\addlegendimage{no markers, color0}
\addlegendimage{no markers, color1}
\addlegendimage{no markers, color2}
\addplot [very thick, color0, mark=+, mark size=3, mark options={solid}]
table [row sep=\\]{%
16	0.0709165876586422 \\
64	0.0520244838926537 \\
256	0.0171552130316527 \\
1024	0.00545920873206068 \\
4096	0.0032973583409459 \\
16384	0.00316065650121299 \\
65536	0.00317909881903189 \\
};
\addplot [very thick, color1, mark=triangle*, mark size=3, mark options={solid,rotate=90}]
table [row sep=\\]{%
16	0.0720386155016177 \\
64	0.0519762461879423 \\
256	0.0168597531837357 \\
1024	0.00465942984287403 \\
4096	0.00150949054331418 \\
16384	0.000941778392023012 \\
65536	0.000877022854153897 \\
};
\addplot [very thick, color2, dashed]
table [row sep=\\]{%
16	0.0354582938293211 \\
64	0.00886457345733027 \\
256	0.00221614336433257 \\
1024	0.000554035841083142 \\
};
\path [draw=black, fill opacity=0] (axis cs:0,0.000434346659427115)
--(axis cs:0,0.0918896785865606);

\path [draw=black, fill opacity=0] (axis cs:1,0.000434346659427115)
--(axis cs:1,0.0918896785865606);

\path [draw=black, fill opacity=0] (axis cs:10.5560632861832,0)
--(axis cs:99334.0009028252,0);

\path [draw=black, fill opacity=0] (axis cs:10.5560632861832,1)
--(axis cs:99334.0009028252,1);

\end{axis}

\end{tikzpicture}}
\includegraphics[width=0.33\textwidth,keepaspectratio]{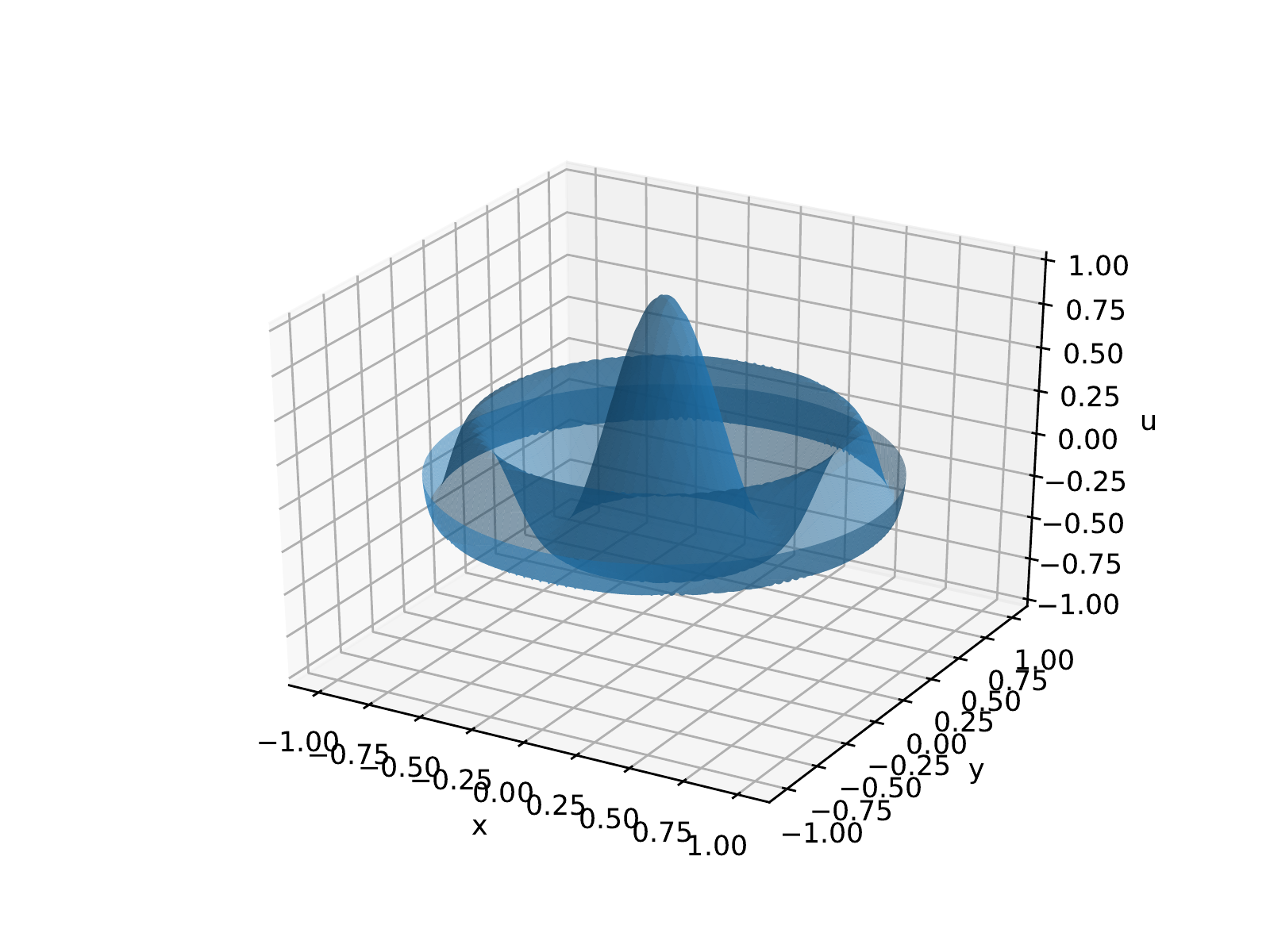}~
\scalebox{0.5}{
\begin{tikzpicture}

\definecolor{color0}{rgb}{0.12156862745098,0.466666666666667,0.705882352941177}
\definecolor{color1}{rgb}{1,0.498039215686275,0.0549019607843137}
\definecolor{color2}{rgb}{0.172549019607843,0.627450980392157,0.172549019607843}

\begin{axis}[
grid = both,
grid style={dashed,gray},
legend cell align={left},
legend entries={{$n=1000, m=20$},{$n=5000, m=40$},{$\mathcal{O}(N^{-1})$}},
legend style={draw=white!80.0!black},
tick align=outside,
tick pos=left,
x grid style={white!69.01960784313725!black},
xlabel={Degrees of freedom},
xmin=10.5560632861832, xmax=99334.0009028252,
xmode=log,
xtick={1,10,100,1000,10000,100000,1000000},
xticklabels={${10^{0}}$,${10^{1}}$,${10^{2}}$,${10^{3}}$,${10^{4}}$,${10^{5}}$,${10^{6}}$},
y grid style={white!69.01960784313725!black},
ylabel={Error},
ymin=0.000615054344343345, ymax=0.743116534847316,
ymode=log,
ytick={1e-05,0.0001,0.001,0.01,0.1,1,10},
yticklabels={${10^{-5}}$,${10^{-4}}$,${10^{-3}}$,${10^{-2}}$,${10^{-1}}$,${10^{0}}$,${10^{1}}$}
]
\addlegendimage{no markers, color0}
\addlegendimage{no markers, color1}
\addlegendimage{no markers, color2}
\addplot [very thick, color0, mark=+, mark size=3, mark options={solid}]
table [row sep=\\]{%
16	0.534444231513862 \\
64	0.0526460192025517 \\
256	0.0328789321383582 \\
1024	0.01045335374436 \\
4096	0.00382555782498392 \\
16384	0.00301652988601027 \\
65536	0.00301428300182034 \\
};
\addplot [very thick, color1, mark=triangle*, mark size=3, mark options={solid,rotate=90}]
table [row sep=\\]{%
16	0.538219583623174 \\
64	0.0528921541220693 \\
256	0.0327439548728642 \\
1024	0.0101215384088496 \\
4096	0.00271239407559404 \\
16384	0.00103922395217698 \\
65536	0.000849201825831769 \\
};
\addplot [very thick, color2, dashed]
table [row sep=\\]{%
16	0.267222115756931 \\
64	0.0668055289392328 \\
256	0.0167013822348082 \\
1024	0.00417534555870205 \\
};
\path [draw=black, fill opacity=0] (axis cs:0,0.000615054344343345)
--(axis cs:0,0.743116534847316);

\path [draw=black, fill opacity=0] (axis cs:1,0.000615054344343345)
--(axis cs:1,0.743116534847316);

\path [draw=black, fill opacity=0] (axis cs:10.5560632861832,0)
--(axis cs:99334.0009028252,0);

\path [draw=black, fill opacity=0] (axis cs:10.5560632861832,1)
--(axis cs:99334.0009028252,1);

\end{axis}

\end{tikzpicture}}
\includegraphics[width=0.33\textwidth,keepaspectratio]{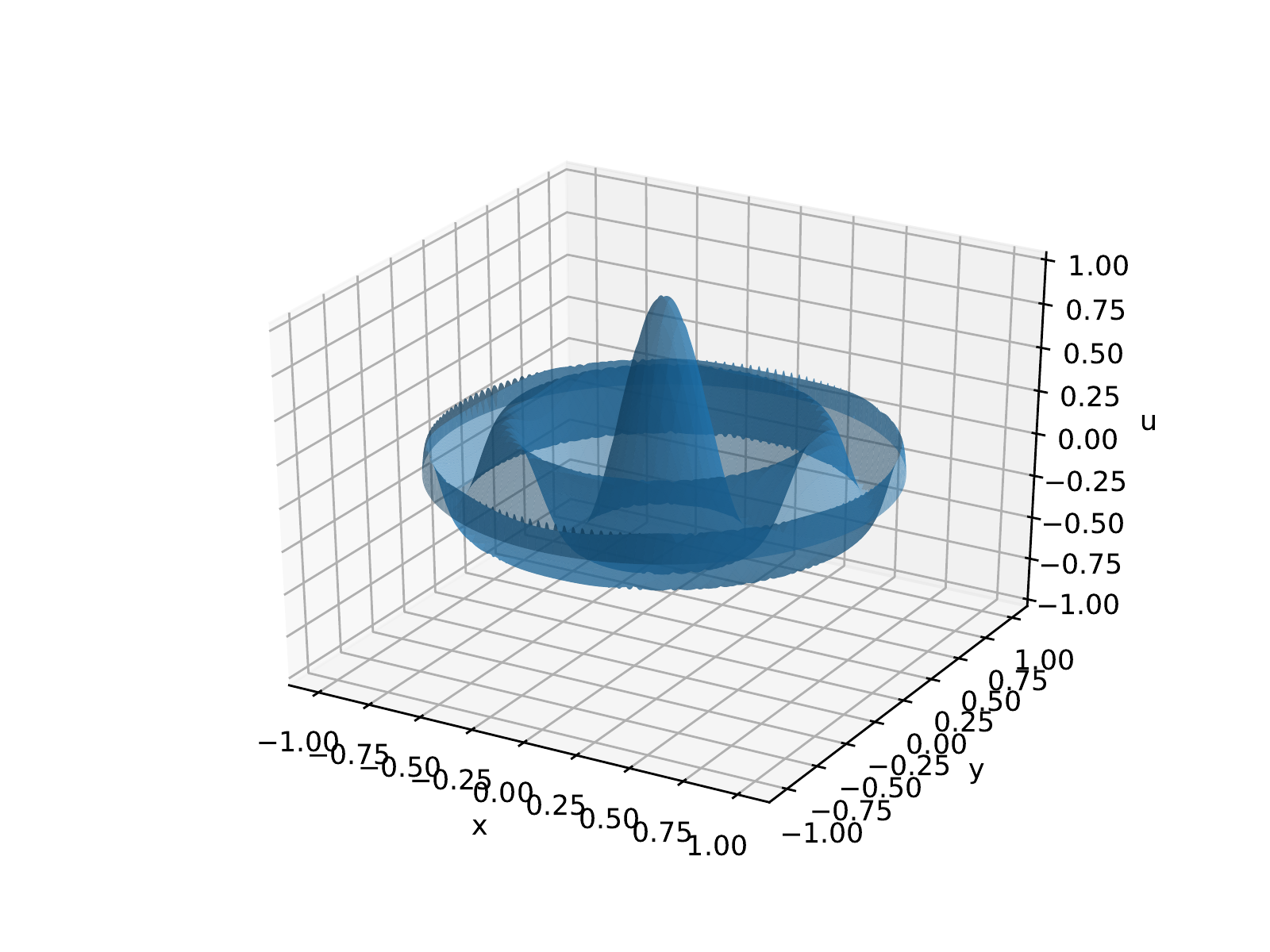}~
\scalebox{0.5}{
\begin{tikzpicture}

\definecolor{color0}{rgb}{0.12156862745098,0.466666666666667,0.705882352941177}
\definecolor{color1}{rgb}{1,0.498039215686275,0.0549019607843137}
\definecolor{color2}{rgb}{0.172549019607843,0.627450980392157,0.172549019607843}

\begin{axis}[
grid = both,
grid style={dashed,gray},
legend cell align={left},
legend entries={{$n=1000, m=20$},{$n=5000, m=40$},{$\mathcal{O}(N^{-1})$}},
legend style={draw=white!80.0!black},
tick align=outside,
tick pos=left,
x grid style={white!69.01960784313725!black},
xlabel={Degrees of freedom},
xmin=10.5560632861832, xmax=99334.0009028252,
xmode=log,
xtick={1,10,100,1000,10000,100000,1000000},
xticklabels={${10^{0}}$,${10^{1}}$,${10^{2}}$,${10^{3}}$,${10^{4}}$,${10^{5}}$,${10^{6}}$},
y grid style={white!69.01960784313725!black},
ylabel={Error},
ymin=0.000584053800768533, ymax=1.77896872831977,
ymode=log,
ytick={1e-05,0.0001,0.001,0.01,0.1,1,10,100},
yticklabels={${10^{-5}}$,${10^{-4}}$,${10^{-3}}$,${10^{-2}}$,${10^{-1}}$,${10^{0}}$,${10^{1}}$,${10^{2}}$}
]
\addlegendimage{no markers, color0}
\addlegendimage{no markers, color1}
\addlegendimage{no markers, color2}
\addplot [very thick, color0, mark=+, mark size=3, mark options={solid}]
table [row sep=\\]{%
16	1.22834873352337 \\
64	0.187814401576398 \\
256	0.0400050256270758 \\
1024	0.0180645185470789 \\
4096	0.00558367681816824 \\
16384	0.00302548497931177 \\
65536	0.00289372134414085 \\
};
\addplot [very thick, color1, mark=triangle*, mark size=3, mark options={solid,rotate=90}]
table [row sep=\\]{%
16	1.23542848681326 \\
64	0.18965773077752 \\
256	0.0399006695949209 \\
1024	0.017925669831132 \\
4096	0.00501558782503527 \\
16384	0.0014160255763014 \\
65536	0.000841014642542055 \\
};
\addplot [very thick, color2, dashed]
table [row sep=\\]{%
16	0.614174366761687 \\
64	0.153543591690422 \\
256	0.0383858979226054 \\
1024	0.00959647448065136 \\
};

\end{axis}

\end{tikzpicture}}
\includegraphics[width=0.33\textwidth,keepaspectratio]{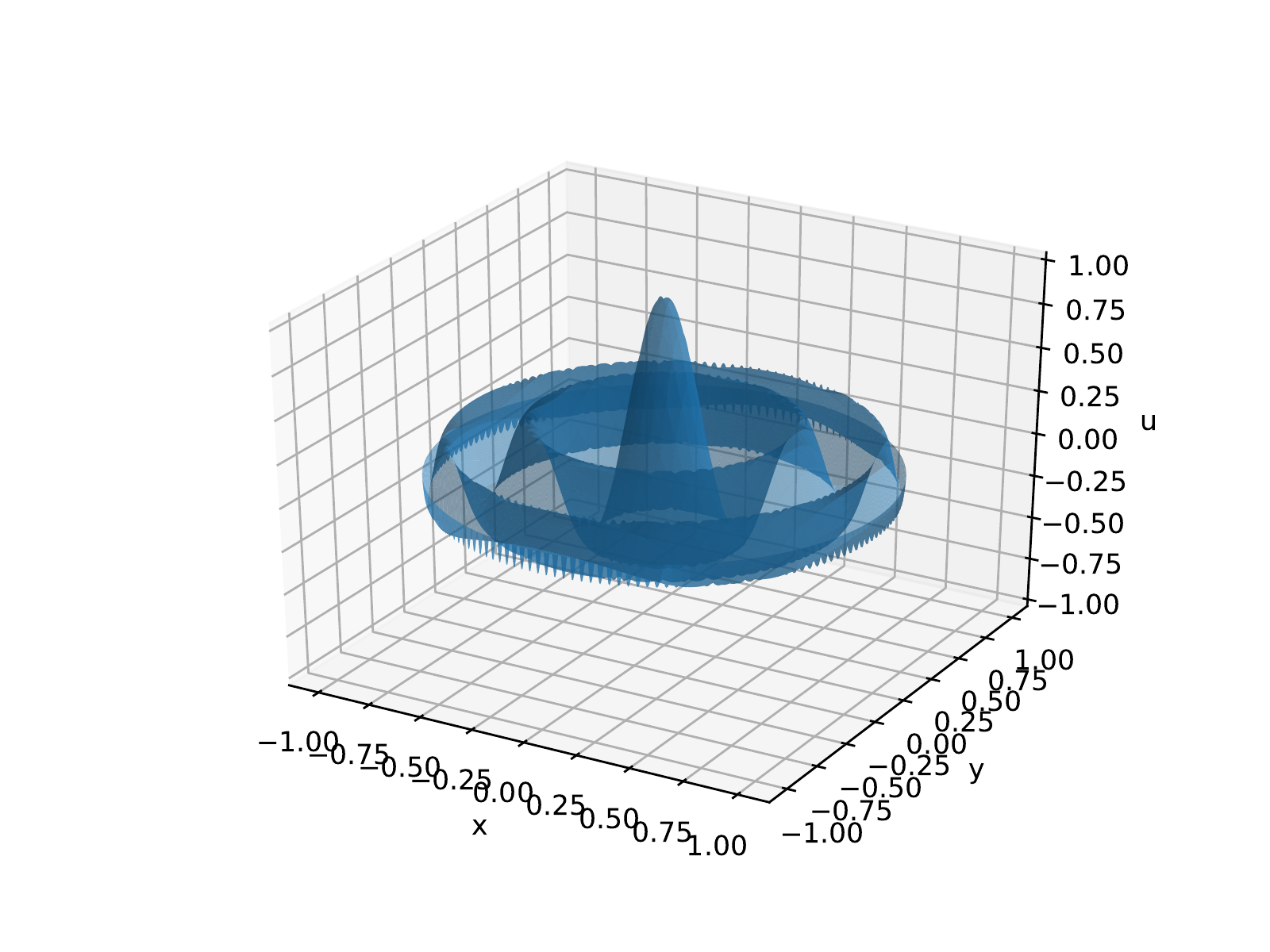}~
\scalebox{0.5}{
\begin{tikzpicture}

\definecolor{color0}{rgb}{0.12156862745098,0.466666666666667,0.705882352941177}
\definecolor{color1}{rgb}{1,0.498039215686275,0.0549019607843137}
\definecolor{color2}{rgb}{0.172549019607843,0.627450980392157,0.172549019607843}

\begin{axis}[
grid = both,
grid style={dashed,gray},
legend cell align={left},
legend entries={{$n=1000, m=20$},{$n=5000, m=40$},{$\mathcal{O}(N^{-1})$}},
legend style={draw=white!80.0!black},
tick align=outside,
tick pos=left,
x grid style={white!69.01960784313725!black},
xlabel={Degrees of freedom},
xmin=10.5560632861832, xmax=99334.0009028252,
xmode=log,
xtick={1,10,100,1000,10000,100000,1000000},
xticklabels={${10^{0}}$,${10^{1}}$,${10^{2}}$,${10^{3}}$,${10^{4}}$,${10^{5}}$,${10^{6}}$},
y grid style={white!69.01960784313725!black},
ylabel={Error},
ymin=0.000594479436530356, ymax=4.05960060057406,
ymode=log,
ytick={1e-05,0.0001,0.001,0.01,0.1,1,10,100},
yticklabels={${10^{-5}}$,${10^{-4}}$,${10^{-3}}$,${10^{-2}}$,${10^{-1}}$,${10^{0}}$,${10^{1}}$,${10^{2}}$}
]
\addlegendimage{no markers, color0}
\addlegendimage{no markers, color1}
\addlegendimage{no markers, color2}
\addplot [very thick, color0, mark=+, mark size=3, mark options={solid}]
table [row sep=\\]{%
16	2.70289301284659 \\
64	0.595038538959283 \\
256	0.0354336618186919 \\
1024	0.0253105209358127 \\
4096	0.00858900918445404 \\
16384	0.00333853187037988 \\
65536	0.00280876032421907 \\
};
\addplot [very thick, color1, mark=triangle*, mark size=3, mark options={solid,rotate=90}]
table [row sep=\\]{%
16	2.71765818446376 \\
64	0.599650547754297 \\
256	0.0356401967890222 \\
1024	0.0252580156424062 \\
4096	0.00831548909616023 \\
16384	0.00219092663093537 \\
65536	0.000888025245913609 \\
};
\addplot [very thick, color2, dashed]
table [row sep=\\]{%
16	1.3514465064233 \\
64	0.337861626605824 \\
256	0.084465406651456 \\
1024	0.021116351662864 \\
};

\end{axis}

\end{tikzpicture}}
\caption{Convergence results for \Cref{equ:ben}. The first column shows the reference solution. The second column shows the convergence of the algorithm with respect the degrees of freedom $N$ by varying the number of refinements for different number of quadrature points. The parameters $m$ and $n$ are the number of quadrature points in the axial and radial directions.}
\label{fig:1}
\end{figure}

\subsection{Comparison with FEM}

In this section, we compare the accuracy of the proposed algorithm with finite element analysis on a per-degree-of-freedom basis. We solve the same problem in \Cref{equ:ben} with $n=2$ and $s=0.5$ using isogeometric analysis as well as finite element analysis~\cite{acosta2017short}. The corresponding finite element analysis codes are made available by the authors.\footnote{We used the codes from \\ {\scriptsize\url{https://github.com/fbersetche/A-short-FE-implementation-for-a-2d-homogeneous-Dirichlet-problem-of-a-Fractional-Laplacian}}} For the finite element method, the error is measured as the mean squared error on the vertices $\tilde \bx_k$ of the mesh triangles, i.e.
$$\mathrm{error} = \sqrt{\frac{\sum_{k} \left|u_h(\bx_{k})-(1-|\bx_{k}|^2)^s\varphi_n(\bx_{k})\right|^2}{(l_u-p)(l_v-q)}}$$
In \Cref{fig:compare}, we report the log-scale plots of the errors for FEM and isogeometric analysis~(IGA) with $n=1000$, $m=20$ and $n=5000$, $m=40$. We can see that isogeometric analysis exhibits superior accuracy compared with finite element analysis for the fractional Laplacian problem. Besides, isogeometric analysis yields a better convergence rate without particular preprocessing such as the graded meshes adopted in \cite{acosta2017short}. We also point out the isogeometric analysis code is much easier to implement, while for finite element analysis we need to carefully treat the quadrature rules for the singular kernel. For example, \cite{acosta2017short} applied the Duffy-type transforms and implemented different quadrature rules according to the relative position of two triangle elements~(i.e., identical, sharing a vertex, sharing an edge and disjoint).

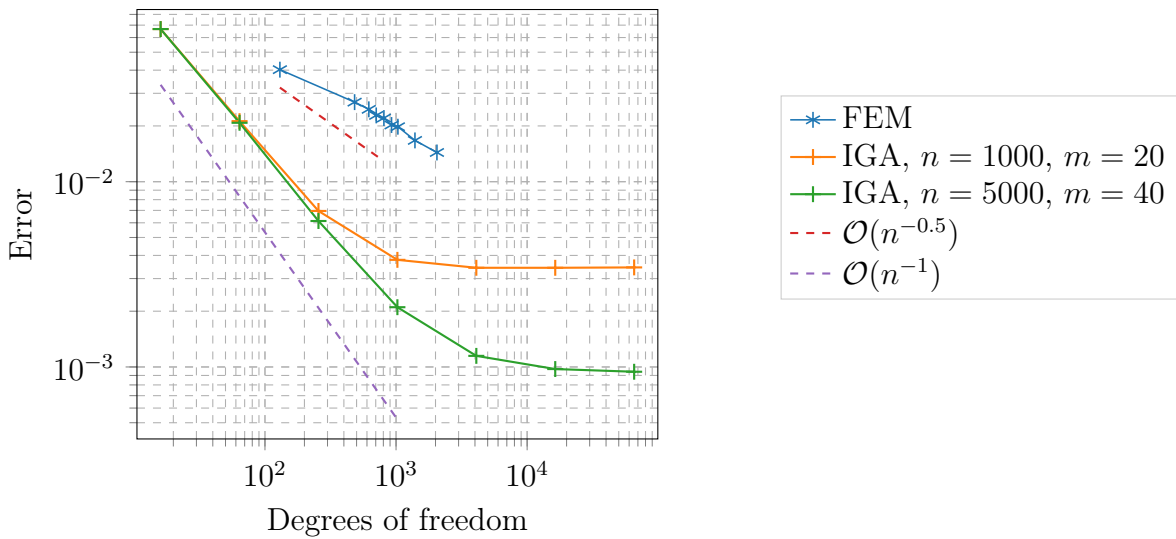
\begin{figure}[htbp]
\centering
\scalebox{1.0}{
\begin{tikzpicture}

\definecolor{color0}{rgb}{0.12156862745098,0.466666666666667,0.705882352941177}
\definecolor{color1}{rgb}{1,0.498039215686275,0.0549019607843137}
\definecolor{color2}{rgb}{0.172549019607843,0.627450980392157,0.172549019607843}
\definecolor{color3}{rgb}{0.83921568627451,0.152941176470588,0.156862745098039}
\definecolor{color4}{rgb}{0.580392156862745,0.403921568627451,0.741176470588235}

\begin{axis}[
grid = both,
grid style={dashed,gray},
legend cell align={left},
legend style={at = {(2.0,0.8)}, draw=white!80.0!black},
log basis x={10},
log basis y={10},
tick align=outside,
tick pos=left,
x grid style={white!69.01960784313725!black},
xlabel={Degrees of freedom},
xmin=10.5560632861832, xmax=99334.0009028252,
xmode=log,
y grid style={white!69.01960784313725!black},
ylabel={Error},
ymin=0.000408727467761116, ymax=0.0849894054772691,
ymode=log
]
\addplot [semithick, color0, mark=asterisk, mark size=3, mark options={solid}]
table {%
130 0.040254
483 0.026884
620 0.024585
706 0.022796
808 0.022064
924 0.02023
1033 0.01981
1394 0.016708
2051 0.01438
};
\addlegendentry{FEM}
\addplot [thick, color1, mark=+, mark size=3, mark options={solid}]
table {%
16 0.0666813360271643
64 0.0212246498265138
256 0.00695208588490713
1024 0.00378648871048879
4096 0.00343042478860083
16384 0.00343012674368151
65536 0.00344666191777939
};
\addlegendentry{IGA, $n=1000$, $m=20$}
\addplot [thick, color2, mark=+, mark size=3, mark options={solid}]
table {%
16 0.0666647746020367
64 0.0207973337117658
256 0.00613583829457505
1024 0.00210176876439697
4096 0.00114790778341626
16384 0.000974478627053965
65536 0.000942320039038769
};
\addlegendentry{IGA, $n=5000$, $m=40$}
\addplot [thick, color3, dashed]
table {%
130 0.0322032
483 0.0167069487366617
620 0.0147460214279496
706 0.0138187371119785
808 0.0129171000969587
};
\addlegendentry{$\mathcal{O}(n^{-0.5})$}
\addplot [thick, color4, dashed]
table {%
16 0.0333406680135822
64 0.00833516700339554
256 0.00208379175084888
1024 0.000520947937712221
};
\addlegendentry{$\mathcal{O}(n^{-1})$}
\end{axis}

\end{tikzpicture}}
\caption{Isogeometric analysis exhibits superior accuracy compared with finite element analysis for the fractional Laplacian problem.}
\label{fig:compare}
\end{figure}

\subsection{Application: Porous Medium Equation with the Fractional Laplacian}
 
 This application is concerned with the fractional porous media equation, which describes anomalous diffusion in the porous media~\cite{de2010fractional}
 \begin{equation}\label{equ:porous}
     \begin{cases}
         u_t + (-\Delta)^s(|u|^{m-1}u) = 0 & \bx\in \mathbb{R}^2,\; t>0\\
         u(\bx,0) = f(\bx) & \bx\in \mathbb{R}^2
     \end{cases}
 \end{equation}
 
The motivation of studying nonlocal models for diffusion in porous media is two folds: on the one hand, the traditional diffusion equation which leads to a Gaussian solution does not always agree with experimental results, and in some cases fractional models are required to match experiments; on the other hand, the fractional diffusion equation \mbox{\Cref{equ:porous}} can be derived from ``first principle'' using statistical mechanics, i.e., the fractional Laplacian naturally arises from macroscopic governing equations of particles undergoing $\alpha$-stable L\'evy motion. The associated parameter $s$ can be found from experimental data \mbox{\cite{gulian2019machine,maryshev2013adjoint,xu2018calibrating}}. In sum, the use of fractional Laplacian operator leads to more accurate modeling compared to integer-order differential equation approximations.
 
 We truncate the domain to $[-1,1]^2$ and only consider the nonnegative solutions $u\geq 0$. Define $t_n=(n-1)\Delta t$ to be the integration time $0\leq t_n\leq T$ and $\Delta t = \frac{T}{n_T}$. Let $u^n_{ij}$ be the numerical solution to $u(\hat\bx_{ij}, t_n)$ and denote $\mathbf{u}^n=\texttt{vec}(\{u^n_{ij}\}_{ij})$. We denote $(u^m)^n_{ij}=u(\hat\bx_{ij}, t_n)^m$. Consider the Crank-Nicolson discretization for \Cref{equ:porous}
$${{u_{ij}^{n + 1} - u_{ij}^n} \over {\Delta t}} + {1 \over 2}{( - \Delta )^s}({u^m})_{ij}^{n + 1} + {1 \over 2}{( - \Delta )^s}({u^m})_{ij}^n = 0$$
To linearize the equation, we consider the same technique used in~\cite{plociniczak2014approximation}
$$({u^m})_{ij}^{n + 1} \approx ({u^m})_{ij}^n + m({u^{m - 1}})_{ij}^n(u_{ij}^n - u_{ij}^{n - 1})$$
Using the matrices in \Cref{sect:im}, we have the discretized equation~(assuming $\mathbf{u}^{-1}=\mathbf{0}$)
\begin{align*}
    {{\mathbf{u}}^{n + 1}} &= {{\mathbf{u}}^n} - {{m\Delta t} \over 2}{\mathbf{F}^*}{{\mathbf{M}^*}^{ - 1}}\left( {({{\mathbf{u}}^{m - 1}})_{}^n \otimes ({\mathbf{u}}_{}^n - {\mathbf{u}}_{}^{n - 1})} \right) - {{\Delta t} \over 2}{\mathbf{F}^*}{{\mathbf{M}^*}^{ - 1}}({{\mathbf{u}}^m})_{}^n & n=1,2,\ldots, n_T\\
    \mathbf{u}^1 &= \mathbf{f}
\end{align*}
where $\bf=\texttt{vec}(\{f(\hat\bx_{ij})\}_{i,j\in \mathcal{I}_L})$, and $\otimes$ denotes the element-wise multiplication, and
$$\mathbf{M}^* = \mathbf{M}[\mathcal{I}_L, \mathcal{I}_L]\qquad \mathbf{L}^* = \mathbf{L}[\mathcal{I}_L, \mathcal{I}_L]$$

In the numerical example, we let $f(\bx) = \exp(-100|\bx|^2)$, $a=0.1$, $R=20$, $h=1000$, $n_T=1000$, $\Delta t=0.0001$, 1000 quadrature points in the radial direction and 20 in the axial direction. \Cref{fig:topval} shows the changes in the value $u_h(\mathbf{0}, t)$ for $0\leq t\leq 0.1$ and different $m$, $s$. Typically we do not have analytical solutions for general $m$ and $s$; however, in the special case when $m=1$ and $s=0.5$, we obtain the equation
\begin{equation*}
    \frac{\partial u}{\partial t} + (-\Delta)^{1/2} u = 0
\end{equation*}
the linear fractional heat equation has analytical solution through convolution with the explicit Poisson kernel in $\mathbb{R}^3_+$~($\mathbb{R}^2$ spatial and $\mathbb{R}_+$ temporal)~\cite{de2010fractional}
\begin{equation*}
    u(\bx, t) = c_{1/2,2} \int_{\mathbb{R}^2} \frac{tf(\by)}{(|\bx-\by|^2+t^2)^{3/2}}d\by
\end{equation*}
Particularly, we have in our case
\begin{equation}\label{equ:uexact}
    u(\mathbf{0},t) = 1-10\sqrt{\pi t^2}e^{100t^2}\mathrm{erfc}(10t)
\end{equation}
The equation \Cref{equ:uexact} can be used for verification and the values are plotted alongside the numerical results in \Cref{fig:topval}. We can see that the numerical result and the exact solution coincide for $t\in[0,0.1]$ and $s=0.2$, $m=1$. We can also see that the larger the $s$ or the smaller the $m$, the faster the diffusion is. Different values of $s$ and $m$ provide different profiles of the diffusion process and therefore form a powerful tool for modeling anomalous diffusion. \Cref{fig:final} shows the diffusion profile at $t=0.1$ for different cases. 

\begin{figure}[htbp]
\centering
\scalebox{1.0}{\input{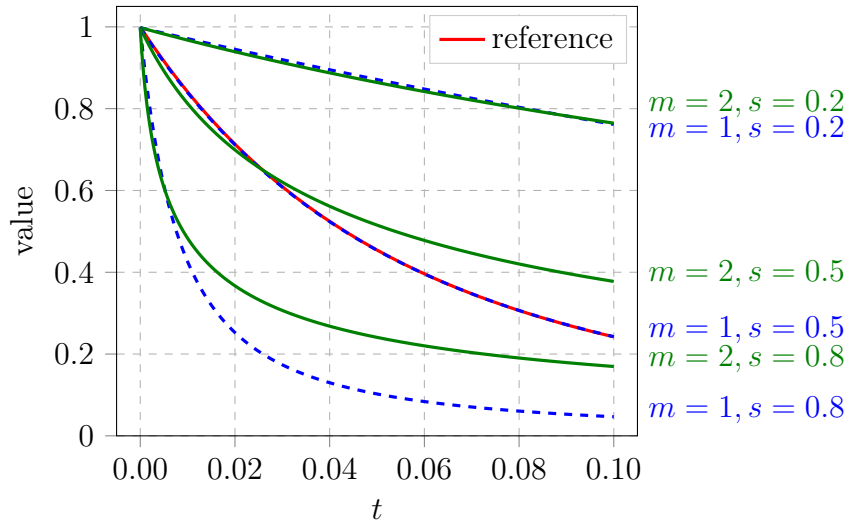}}
\caption{Evolution of $u_h(\mathbf{0}, t)$ in different settings. The numerical results coincide with the exact solution for $m=1$, $s=0.5$. The reference solution is given only for $m=1$, $s=0.5$, because analytical solutions are not available for the other parameters.}
\label{fig:topval}
\end{figure}

\begin{figure}[htpb] 
\centering
\includegraphics[width=0.33\textwidth,keepaspectratio]{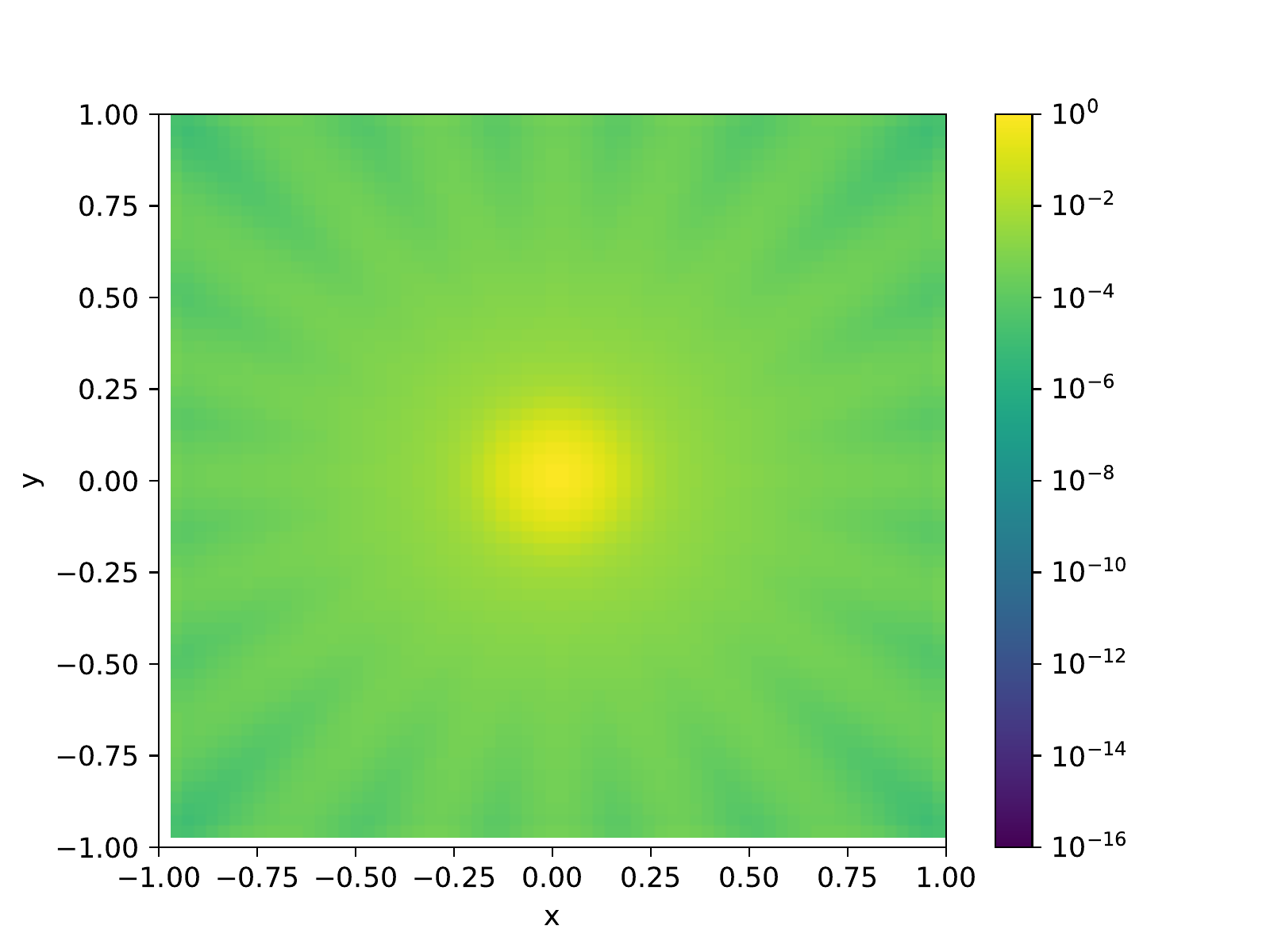}~
\includegraphics[width=0.33\textwidth,keepaspectratio]{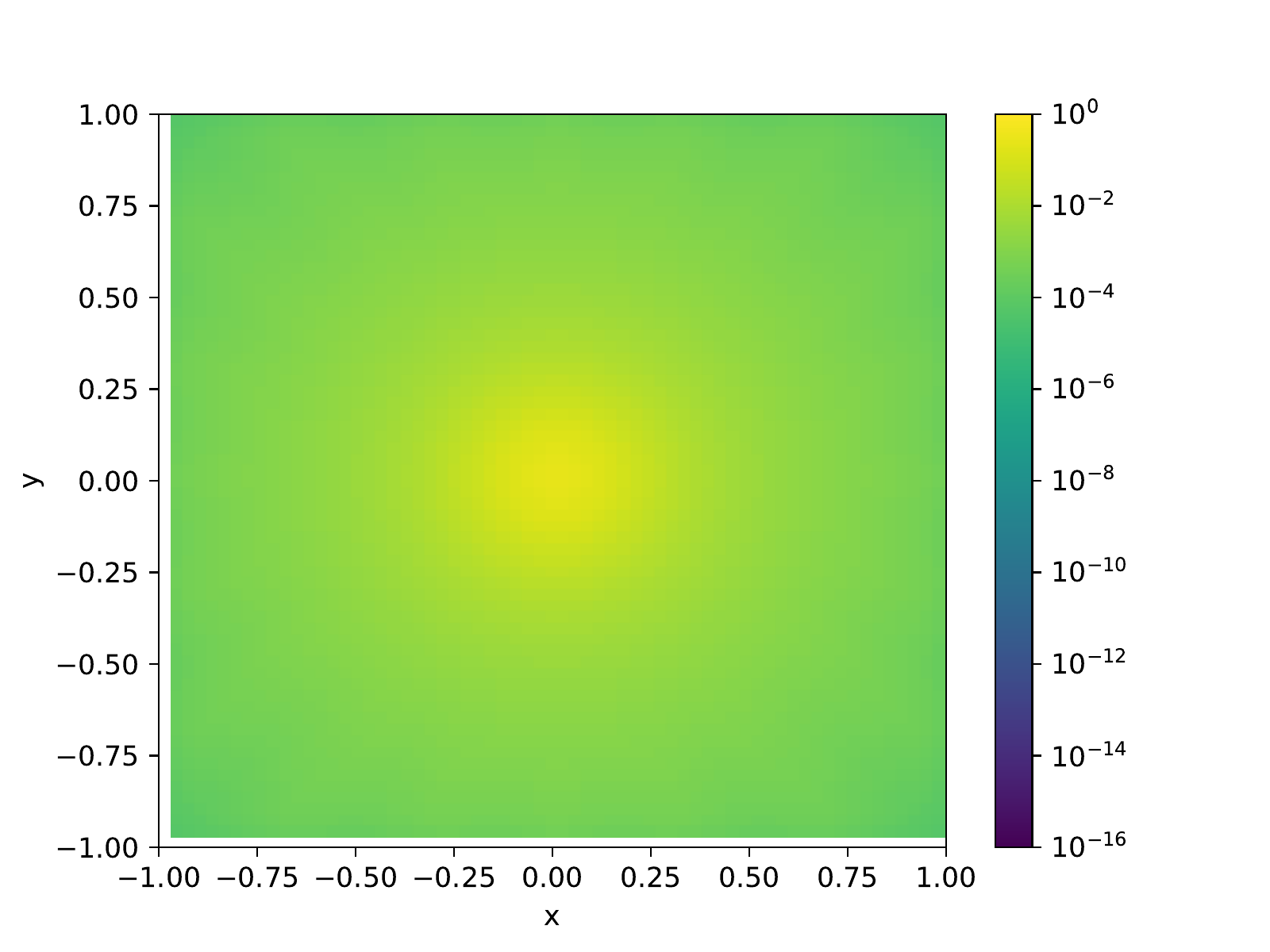}~
\includegraphics[width=0.33\textwidth,keepaspectratio]{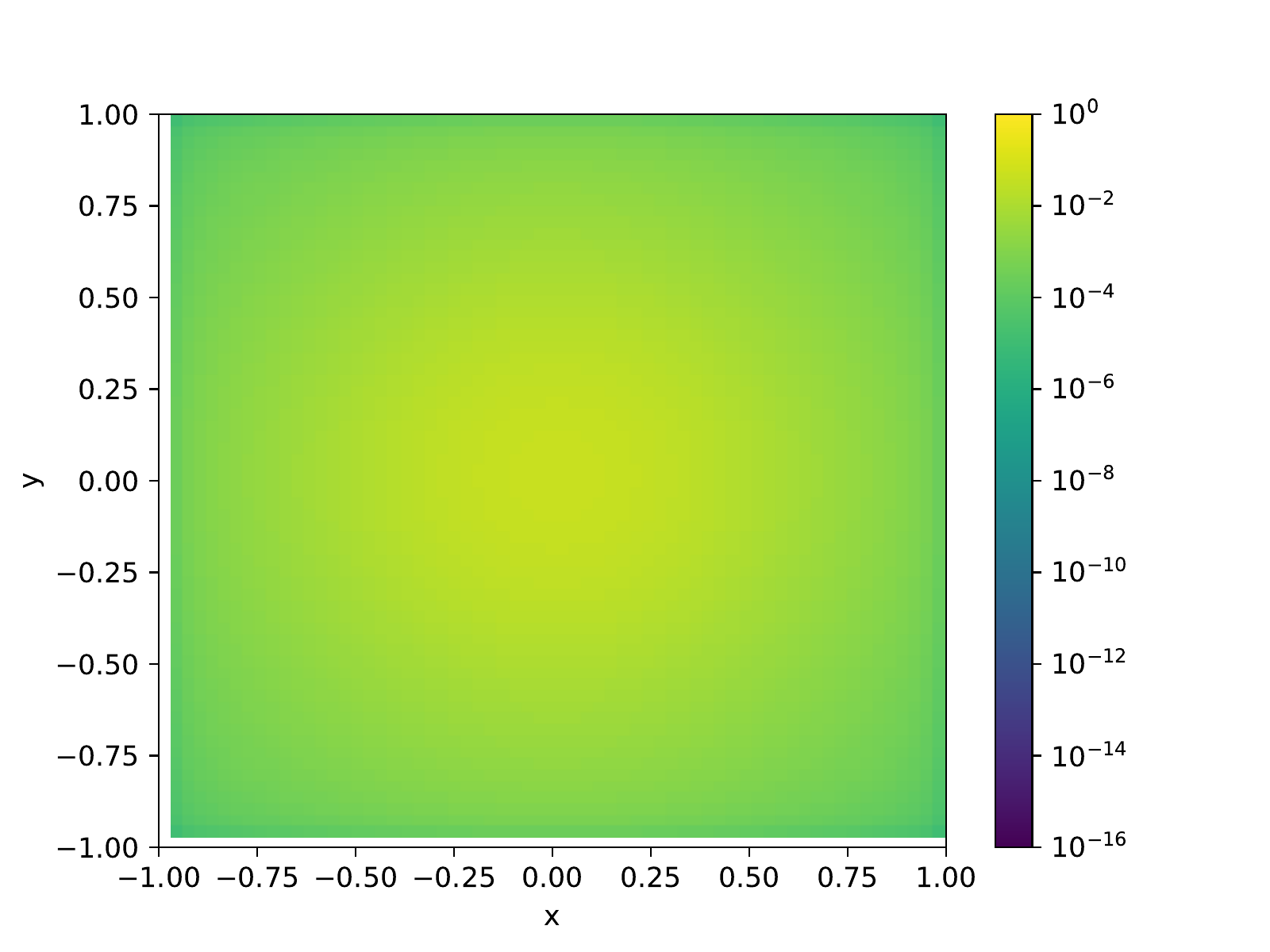}
\includegraphics[width=0.33\textwidth,keepaspectratio]{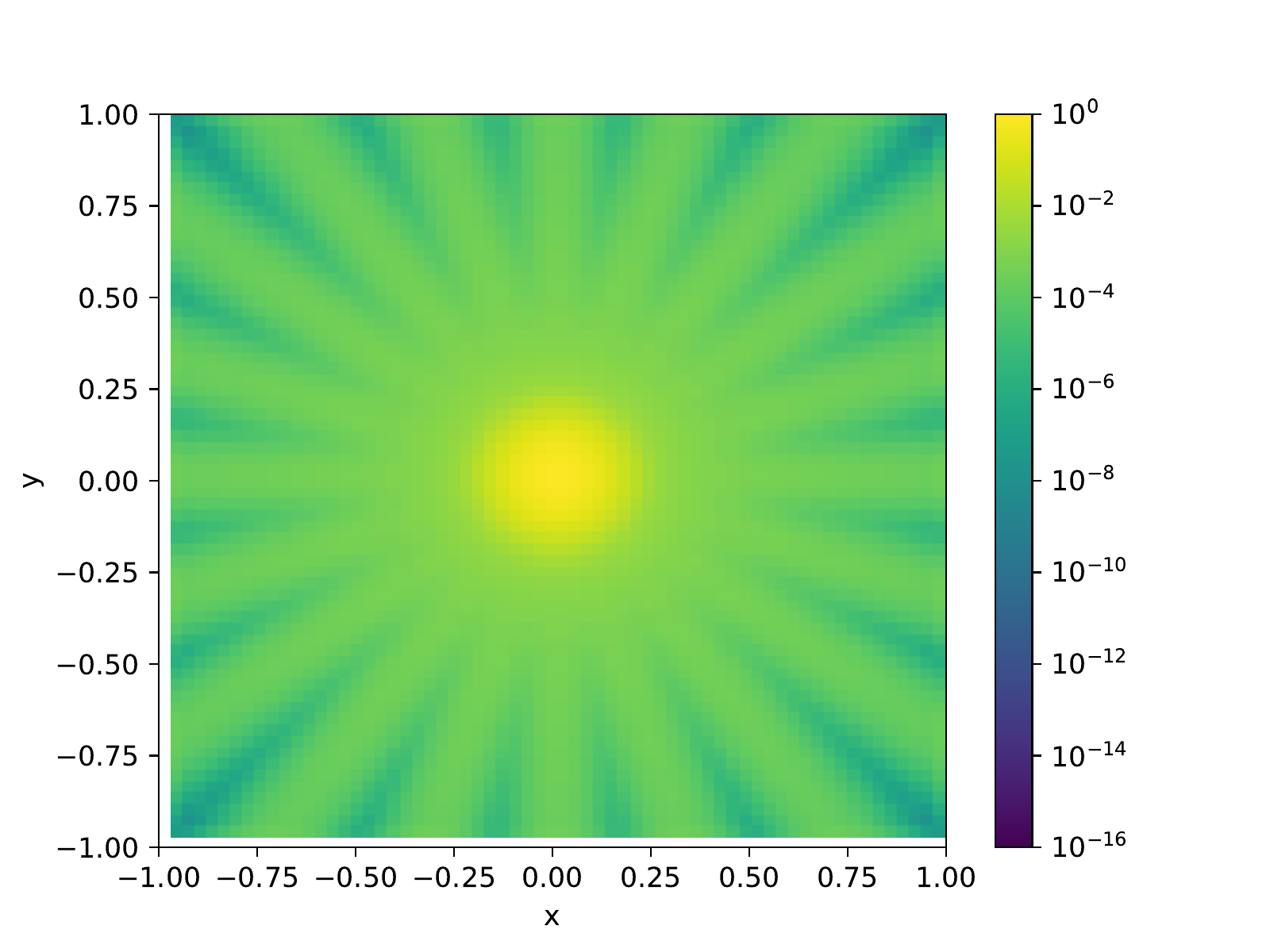}~
\includegraphics[width=0.33\textwidth,keepaspectratio]{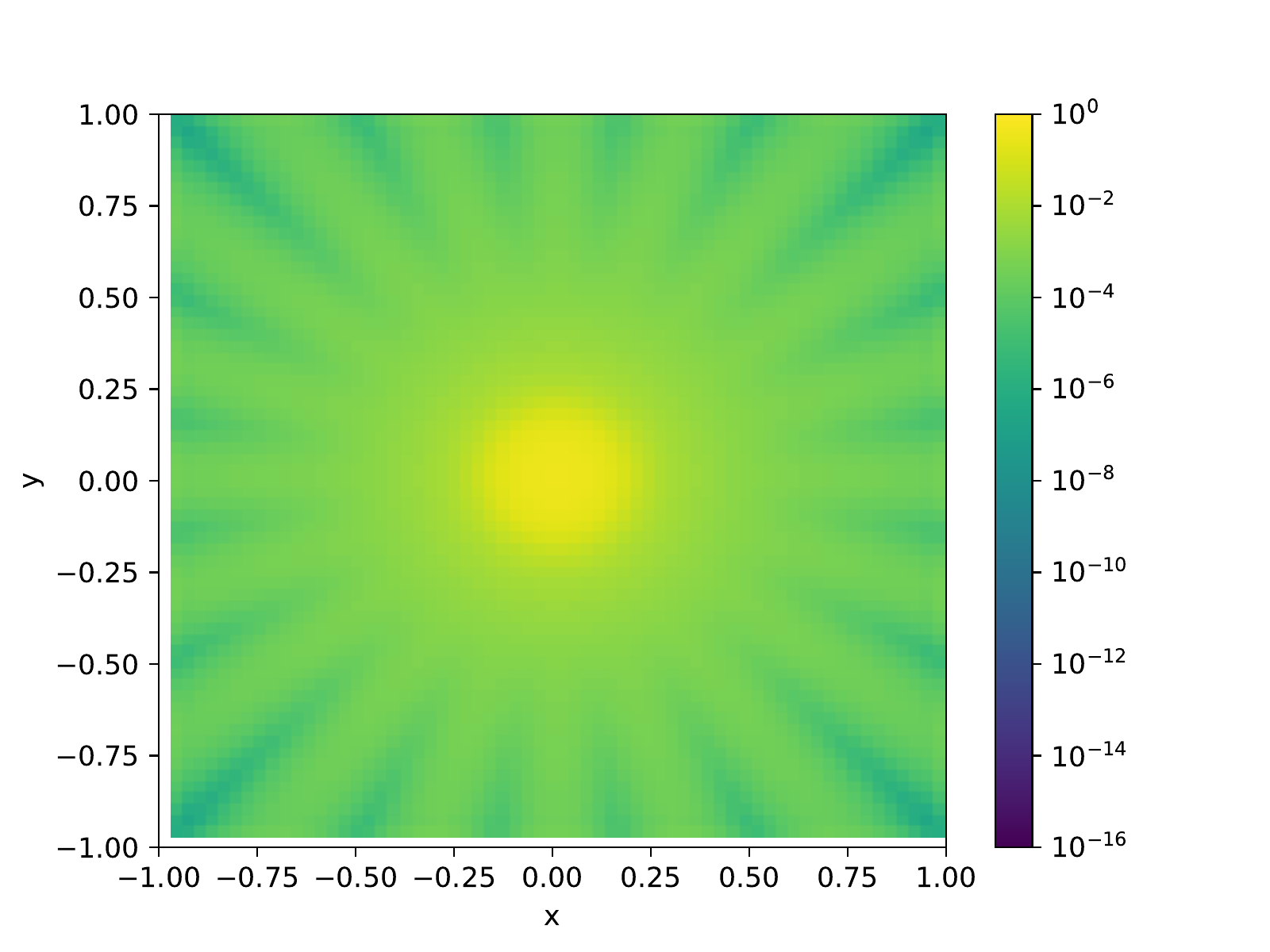}~
\includegraphics[width=0.33\textwidth,keepaspectratio]{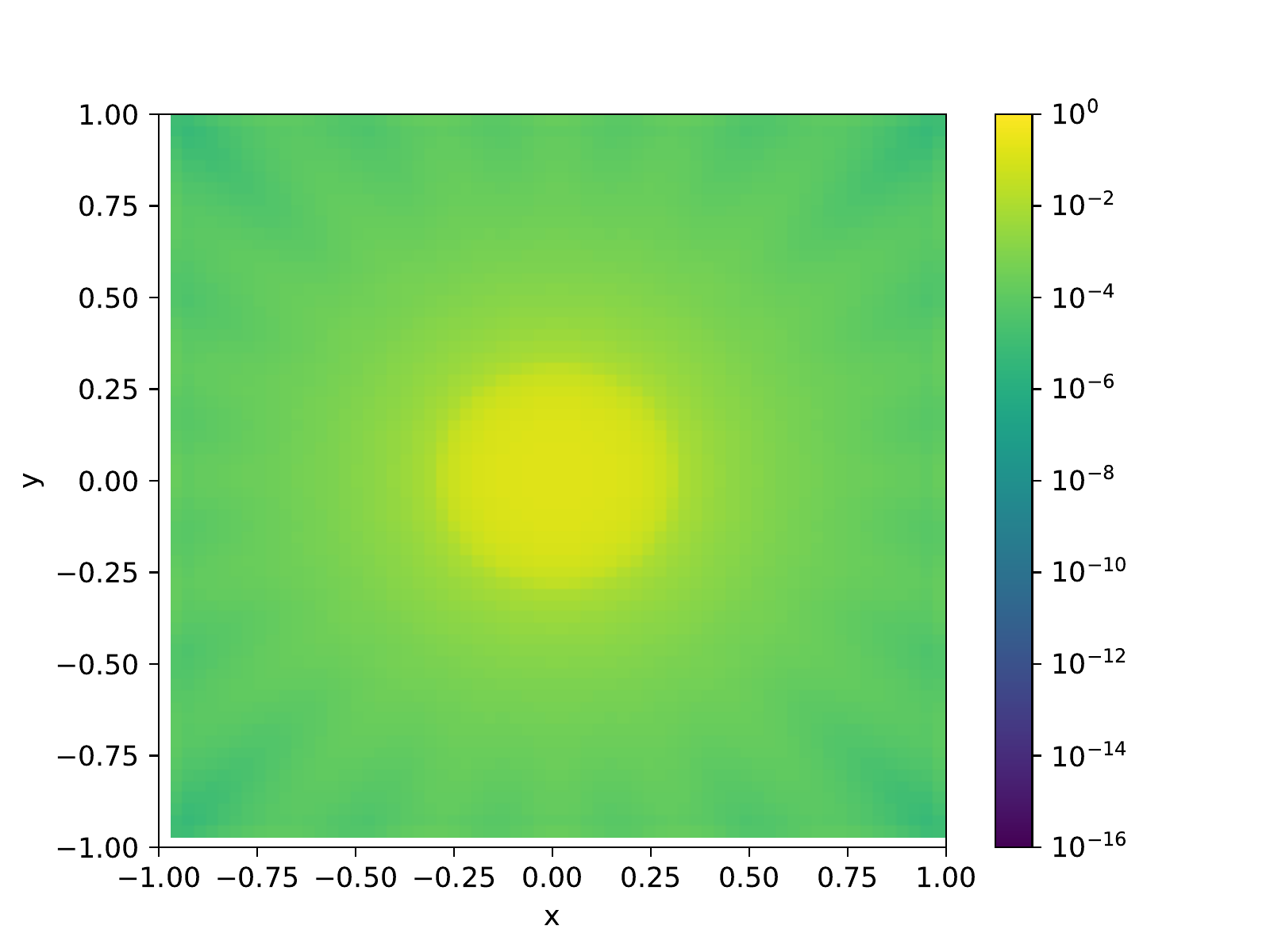}
\caption{Solution to the porous media equation at $t=0.1$. First row: $m=1$, second row: $m=2$; first column: $s=0.2$, second column: $s=0.5$, third column: $s=0.8$. Note the logarithmic scale for colors.}
\label{fig:final}
\end{figure}

\section{Conclusion}\label{sect:conc}

We have studied the NURBS-based isogeometric analysis for PDEs involving the fractional Laplacian operator, and applied the proposed algorithm to solve the fractional porous media equation. The proposed algorithm showed promising results on the benchmark problems where monotonous convergence was obtained and a consistent convergence rate was observed throughout multiple test cases. In the porous media equation case, the numerical solution coincided with the exact solution when the latter was available. 

We conclude that isogeometric analysis is a viable tool for nonlocal problems modeled by the fractional Laplacian. 
For domains admitting a Cartesian grid discretization, meaning that the domain can be tessellated with squares (cubes), the finite difference method is applicable and easy to implement. The main challenge for many applications is that the domains are complex. The finite element method~\cite{grossmann2012isogeometric} is a standard approach to tackle complex domain and it works with the weak formulation of the partial differential equations. However, the implementation is substantially nontrivial for the fractional Laplacian due to the singularity in the integrand. Working with the strong form is easier but requires higher smoothness assumptions on the solution representation. The NURBS-based basis functions in the isogeometric analysis are sufficiently smooth with appropriate degrees, thus we can work with the strong form directly and have simpler implementations. Moreover, the isogeometric analysis enables defining precisely many complex geometries (in this work, we worked with both the disk domain and the square domain) \cite{hassani2011application} and has higher accuracy per degree of freedom than the finite element analysis.

Despite the many strengths of our isogeometric collocation method, it requires pre\-com\-pu\-ting $\mathbf{L}$ in \Cref{alg:poisson}, which can be very expensive since each quadrature point corresponding to the collocation point must be located. That is, we need to find the coordinates in the parameter space from the coordinates of the quadrature points in the physical space, which may require solving many nonlinear equations. This is not a problem with finite difference methods (when applicable) or finite element methods since those methods do not parametrize the physical domain.

In contrast to integer-order differential operators, fractional operators lead to denser coefficient matrices, which have higher storage and computational cost. However, the fractional Laplacian is an important tool when designing high fidelity models. An important next step is to develop efficient algorithms and explore the potential of isogeometric analysis for fractional PDEs; for examples, the computation of integrals based on quadrature rules can be accelerated using fast multiple method~\cite{carrier1988fast,cheng1999fast,nishimura1999fast,nishimura2002fast,song1995multilevel,darve2000fast,coulier2017inverse,darve2011fast,fong2009black}.

\section{Acknowledgements}

Kailai Xu thanks the Stanford Graduate Fellowship in Science \& Engineering and the 2018 Schlumberger Innovation Fellowship for their financial support. Kailai Xu and Eric Darve are also supported in part by the Applied Mathematics Program within the Department of Energy (DOE) Office of Advanced Scientific Computing Research (ASCR), through the Collaboratory on Mathematics and Physics-Informed Learning Machines for Multiscale and Multiphysics Problems Research Center (DESC0019453).

\newpage

\appendix

The appendix includes the basic concepts of isogeometric analysis which are relevant for our implementation.

\section{Preliminary: Isogeometric Analysis}\label{sect:iga}
\subsection{B-spline}

We can describe B-splines in terms of a \textit{knot vector} in the \textit{parameter space}. A knot vector is specified by a non-decreasing set of coordinates 
\begin{equation}
    \mathcal{U} = \{u_1, u_2, \ldots, u_{l+1}\}
\end{equation}
where $u_i$ is called the \textit{knot}, and satisfies
\begin{equation}\label{equ:ueq}
    0 = u_1 = u_1 = \ldots = u_{p+1} \leq u_{p+2}\leq \ldots \leq u_{l-p} \leq u_{l-p+1} =u_{l-p+2} =\ldots = u_{l+1} = 1
\end{equation}
We allow the same value to occur multiple times and it will affect the continuity of the B-spline.  

The $i$-th B-spline basis function of $p$-degree $N_{i,p}$ can be defined recursively as 
\begin{align}
    {B_{i,0}}(u) &= \begin{cases}
        1 & \text{if $u_i \leq u < u_{i+1}$,} \\
        0 & \text{otherwise;}
    \end{cases}\\
    B_{i,p}(u) &= \frac{u-u_i}{u_{i+p}-u_i}B_{i,p-1}(u) + \frac{u_{i+p+1}-u}{u_{i+p+1}-u_{i+1}}B_{i+1,p-1}(u)
\end{align}
We note that B-splines have the following properties:
\begin{itemize}
\item The basis $B_{i,p}$ has compact support in $[u_i, u_{i+p+1}]$. 
\item For a knot vector of size $l+1$, there are $l-p$ B-spline basis functions in total. 
    \item They form a partition of unity, i.e.,
    \begin{equation}
        \sum_{i=1}^{l-p} B_{i,p} = 1
    \end{equation}
    \item Assume that all the inequalities in \Cref{equ:ueq} are strict, then $B_{i,p}\in\mathcal{C}^{p-1}$ but $B_{i,p}\not\in\mathcal{C}^{p}$.
\end{itemize}

\subsection{NURBS}

The NURBS basis function is created from B-splines by
\begin{equation}\label{equ:Nip}
    N_{i,p}(u) = \frac{w_iB_{i,p}(u)}{\sum_{j=1}^{l-p}w_jB_{j,p}(u)}
\end{equation}
where $w_i$ are $l-p$ weights assigned to each B-spline basis function. Bivariate NURBS are constituted by~(suppressing the degrees $p$ and $q$ for $u$ and $v$)
\begin{equation}
    N_{k,l}(u, v) = \frac{w_{kl}B_k(u)B_l(v)}{\sum_{i=1}^{l_u-p}\sum_{j=1}^{l_v-q}w_{ij}B_i(u)B_j(v) }
\end{equation}
where $l_u+1$ and $l_v+1$ are the number of knots for $u$ and $v$ knot vectors.

A domain in one-dimension and two-dimension Cartesian space can be constructed with 
\begin{equation}\label{equ:map}
  \begin{aligned}
    X &= {F}(u) = \sum_{i=1}^{l-p}N_{i,p}(u)\tilde X_{i}\\
    \bX &= \mathbf{F}(u,v) = \sum_{i=1}^{l_u-p}\sum_{j=1}^{l_v-q} N_{k,l}(u,v)\tilde\bX_{ij}
\end{aligned}
\end{equation}
where $\tilde X_{i}$ and $\tilde \bX_{ij}$ are called \textit{control points}. The equation \Cref{equ:map} defines a mapping from parameter space to the physical space. \Cref{fig:disk} shows two examples of 2D domains constructed from NURBS basis functions. The red dots are the control points while the blue patches are subdomains corresponding to $[u_i,u_{i+1}]\times [v_j, v_{j+1}]$ in the parameter space.
\begin{figure}[htpb] 
\centering
\includegraphics[width=0.8\textwidth,keepaspectratio]{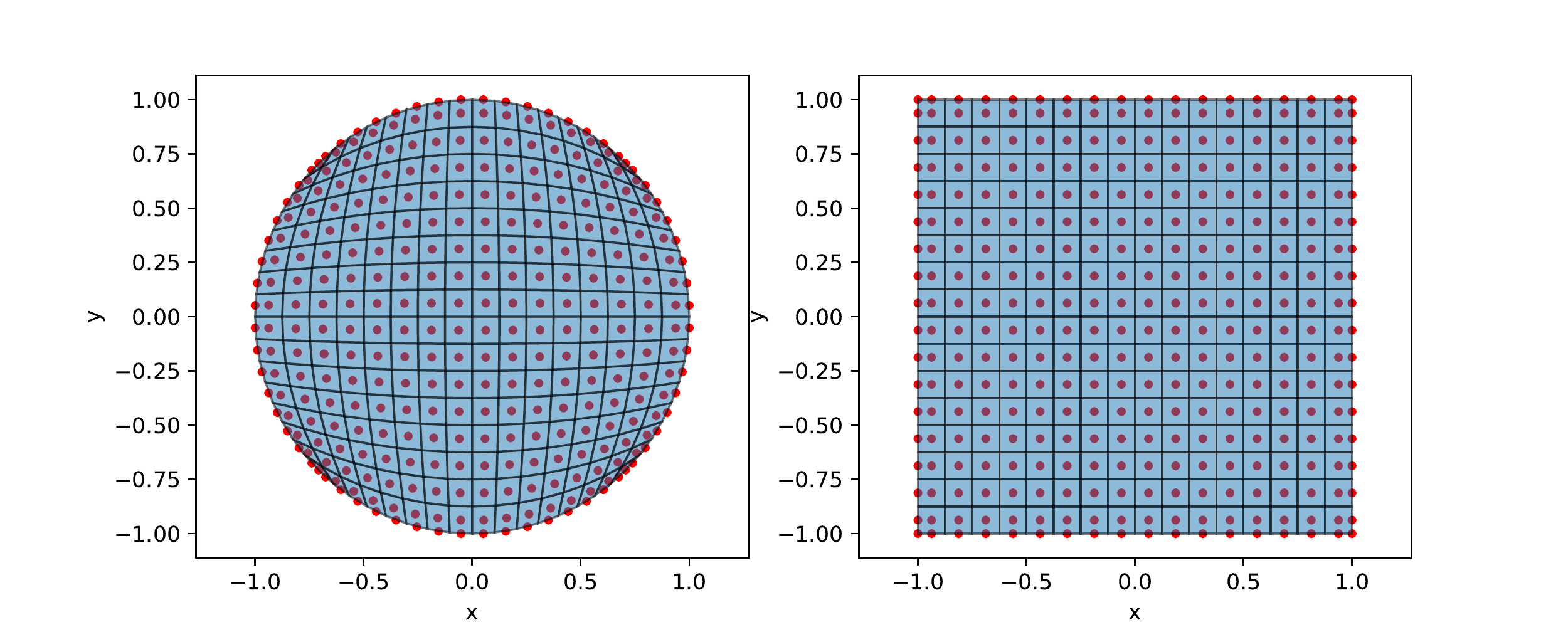}
\caption{Two examples of 2D domains constructed from NURBS basis functions. The red dots are the control points while the blue patches are subdomains corresponding to $[u_i,u_{i+1}]\times [v_j, v_{j+1}]$ in the parameter space.}
\label{fig:disk}
\end{figure}

In isogeometric analysis we use the same functions $N_{kl}$ as basis functions for approximating the solutions. The interpolation function space is defined by a ``push forward'' operator
\begin{equation}
    \mathbf{V} = \mathrm{span}\{N_{kl}\circ \mathbf{F}^{-1}\}_{k=1,2,\ldots,l_u-p;~l=1,2,\ldots,l_v-q}
\end{equation}
i.e., the numerical solution is given by
\begin{equation}
    u_h(x,y) = \sum_{i=1}^{l_u-p}\sum_{j=1}^{l_v-q} N_{ij}(\mathbf{F}^{-1}(x,y)) c_{ij}
\end{equation}
where $c_{ij}$ are coefficients determined by solving the governing equations.

\subsection{Knot Insertion}

The process of mesh refinement is critical to validate the numerical model. In isogeometric analysis, \textit{knot insertion} provides us a way to easily refine the NURBS mesh. For any NURBS curve in the first formula of \Cref{equ:map}, we can view it as the projection of a B-spline curve in the two-dimensional space, where the coordinates are given as 
\begin{equation}
    \bar \bX_i =[w_i\tilde X_i, ~w_i]
\end{equation}
Given the $l-p$ basis functions and the knot vector $\{u_1, u_2, \ldots, u_{l+1}\}$, and $\bar u\in [u_k, u_{k+1})$ be a desired new knot, the new $l+2$ control points $\{\bar \bX_1^\dag,  \bar \bX_2^\dag, \ldots, \bar \bX_{l+2}^\dag \}$ are formed from the original control points by
\begin{equation}
    \bar \bX_i^\dag = \alpha_i \bar \bX_i + (1-\alpha_i) \bar \bX_{i-1}
\end{equation}
where
\begin{equation}
    \alpha_i = \begin{cases}
        1 & 1\leq i \leq k-p,\\
        \frac{\bar u-u_i}{u_{i+p}-u_i} & k-p+1\leq i \leq k\\
        0 & k+1\leq i\leq l+2
    \end{cases}
\end{equation}
The new projected coordinates of the control points in the physical space are obtained using 
\begin{equation}
    \bar X_i^\dag = \frac{(\bar \bX_i^\dag)_1 }{(\bar \bX_i^\dag)_2}
\end{equation}
where $(\ldots)_j$ denotes the $j$-th component. Knot insertion for two or higher dimension NURBS domains can be done separately for each dimension.

\subsection{Computing the Coordinates in the Parameter Space}

In the course of computing the fractional Laplacian, we need to compute the coordinate in the parametric space given its physical space location. NURBS does not provide a direct way to do this and thus a nonlinear equation is usually required to be solved. Specifically, given $\bx = \begin{bmatrix}
    x\\
    y
\end{bmatrix}$, we want to find $(u,v)$ such that
\begin{equation}\label{equ:gn}
    \mathbf{F}(u,v) =\sum_{i=1}^{l_u-p}\sum_{j=1}^{l_v-q} N_{i,j}(u,v)\tilde\bX_{ij} = \begin{bmatrix}
        x\\
        y
    \end{bmatrix}
\end{equation}
We can adopt the Gauss-Newton method for solving \Cref{equ:gn}, where the gradient
\begin{equation}
  \nabla \mathbf{F}(u,v) = \begin{bmatrix}
    \frac{\partial F_1}{\partial u} & \frac{\partial F_1}{\partial v}\\
    \frac{\partial F_2}{\partial u} & \frac{\partial F_2}{\partial v} 
\end{bmatrix}
\end{equation}
is required. The gradient can be computed by noticing that
\begin{equation}
  \begin{aligned}
  S(u,v) &= {{{\left( {\sum\limits_{i,j} {{w_{ij}}{B_i}(u){B_j}(v)} } \right)}^2}}\\
    {\frac{\partial {N_{kl}}(u,v)}  {\partial u}} &= \frac{{w_{kl}}B_k^\prime (u){B_l}(u)\sum\limits_{i,j} {{w_{ij}}{B_i}(u){B_j}(v)} - {w_{kl}}{B_k}(u){B_l}(v)\sum\limits_{i,j} {{w_{ij}}B_i^\prime (u){B_j}(v)} } { S(u,v)}\\
    {\frac{\partial {N_{kl}}(u,v)}  {\partial v}} &= \frac{{w_{kl}}B_k^{}(u)B_l^\prime (u)\sum\limits_{i,j} {{w_{ij}}{B_i}(u){B_j}(v)} - {w_{kl}}{B_k}(u){B_l}(v)\sum\limits_{i,j} {{w_{ij}}B_i^{}(u)B_j^\prime (v)} } { S(u,v)}
\end{aligned}
\end{equation}
The algorithm for computing the coordinates in the parameter space is shown in \Cref{alg:gn}.

\begin{algorithm}[htpb]
\begin{algorithmic}[1]
\STATE \textbf{Input}: $\bx_0$ $\quad$ \textbf{Output}: $\bu$
\STATE Initialize $\bu\gets \begin{bmatrix}
    0.5\\
    0.5
\end{bmatrix}$, $\bx\gets \mathbf{F}(\bu)$
\WHILE{$|\bx_0-\bx|>$ tolerance}
\STATE $\mathbf{d}\gets \left(\nabla\mathbf{F}(\bu)\right)^{-1}(\bx-\bx_0)$
\STATE $\bu\gets \bu - \mathbf{d}$
\STATE $\bx\gets \mathbf{F}(\bu)$
\ENDWHILE
\RETURN $\bu$
\end{algorithmic}
\caption{Gauss-Newton Method for Computing the Coordinates in the Parameter Space.}
\label{alg:gn}
\end{algorithm}

\newpage

\bibliographystyle{plain}
\bibliography{IGA.bib}

\begin{thebibliography}{10}

\bibitem{slidespd48:online}
\_\_\_.
\newblock
  \url{https://www.uni-muenster.de/AMM/num/Vorlesungen/PDEI_SS14/material/slides.pdf}.
\newblock Lecture Notes.

\bibitem{acosta2017short}
Gabriel Acosta, Francisco~M Bersetche, and Juan~Pablo Borthagaray.
\newblock A short {FE} implementation for a 2d homogeneous dirichlet problem of
  a fractional {L}aplacian.
\newblock {\em Computers \& Mathematics with Applications}, 74(4):784--816,
  2017.

\bibitem{ainsworth2017aspects}
Mark Ainsworth and Christian Glusa.
\newblock Aspects of an adaptive finite element method for the fractional
  {L}aplacian: a priori and a posteriori error estimates, efficient
  implementation and multigrid solver.
\newblock {\em Computer Methods in Applied Mechanics and Engineering},
  327:4--35, 2017.

\bibitem{ainsworth2017towards}
Mark Ainsworth and Christian Glusa.
\newblock Towards an efficient finite element method for the integral
  fractional {L}aplacian on polygonal domains.
\newblock {\em arXiv preprint arXiv:1708.01923}, 2017.

\bibitem{auricchio2010isogeometric}
F~Auricchio, L~Beirao Da~Veiga, TJR Hughes, A\_ Reali, and G~Sangalli.
\newblock Isogeometric collocation methods.
\newblock {\em Mathematical Models and Methods in Applied Sciences},
  20(11):2075--2107, 2010.

\bibitem{bazilevs2006isogeometric}
Yuri Bazilevs, L~Beirao~da Veiga, J~Austin Cottrell, Thomas~JR Hughes, and
  Giancarlo Sangalli.
\newblock Isogeometric analysis: approximation, stability and error estimates
  for h-refined meshes.
\newblock {\em Mathematical Models and Methods in Applied Sciences},
  16(07):1031--1090, 2006.

\bibitem{carrier1988fast}
J~Carrier, Leslie Greengard, and Vladimir Rokhlin.
\newblock A fast adaptive multipole algorithm for particle simulations.
\newblock {\em SIAM journal on scientific and statistical computing},
  9(4):669--686, 1988.

\bibitem{cheng1999fast}
Hongwei Cheng, Leslie Greengard, and Vladimir Rokhlin.
\newblock A fast adaptive multipole algorithm in three dimensions.
\newblock {\em Journal of computational physics}, 155(2):468--498, 1999.

\bibitem{coulier2017inverse}
Pieter Coulier, Hadi Pouransari, and Eric Darve.
\newblock The inverse fast multipole method: using a fast approximate direct
  solver as a preconditioner for dense linear systems.
\newblock {\em SIAM Journal on Scientific Computing}, 39(3):A761--A796, 2017.

\bibitem{darve2000fast}
Eric Darve.
\newblock The fast multipole method: numerical implementation.
\newblock {\em Journal of Computational Physics}, 160(1):195--240, 2000.

\bibitem{darve2011fast}
Eric Darve, Cris Cecka, and Toru Takahashi.
\newblock The fast multipole method on parallel clusters, multicore processors,
  and graphics processing units.
\newblock {\em Comptes Rendus Mecanique}, 339(2-3):185--193, 2011.

\bibitem{de2010fractional}
Arturo de~Pablo, Fernando Quir{\'o}s, Ana Rodr{\'\i}guez, and Juan~Luis
  V{\'a}zquez.
\newblock A fractional porous medium equation.
\newblock {\em arXiv preprint arXiv:1001.2383}, 2010.

\bibitem{dyda2017eigenvalues}
Bart{\l}omiej Dyda, Alexey Kuznetsov, and Mateusz Kwa{\'s}nicki.
\newblock Eigenvalues of the fractional laplace operator in the unit ball.
\newblock {\em Journal of the London Mathematical Society}, 95(2):500--518,
  2017.

\bibitem{fong2009black}
William Fong and Eric Darve.
\newblock The black-box fast multipole method.
\newblock {\em Journal of Computational Physics}, 228(23):8712--8725, 2009.

\bibitem{gatto2015numerical}
Paolo Gatto and Jan~S Hesthaven.
\newblock Numerical approximation of the fractional {L}aplacian via $hp$-finite
  elements, with an application to image denoising.
\newblock {\em Journal of Scientific Computing}, 65(1):249--270, 2015.

\bibitem{gibou2005fourth}
Fr{\'e}d{\'e}ric Gibou and Ronald Fedkiw.
\newblock A fourth order accurate discretization for the laplace and heat
  equations on arbitrary domains, with applications to the stefan problem.
\newblock {\em Journal of Computational Physics}, 202(2):577--601, 2005.

\bibitem{grossmann2012isogeometric}
David Gro{\ss}mann, Bert J{\"u}ttler, Helena Schlusnus, Johannes Barner, and
  Anh-Vu Vuong.
\newblock Isogeometric simulation of turbine blades for aircraft engines.
\newblock {\em Computer Aided Geometric Design}, 29(7):519--531, 2012.

\bibitem{gulian2019machine}
Mamikon Gulian, Maziar Raissi, Paris Perdikaris, and George Karniadakis.
\newblock Machine learning of space-fractional differential equations.
\newblock {\em SIAM Journal on Scientific Computing}, 41(4):A2485--A2509, 2019.

\bibitem{hassani2011application}
Behrooz Hassani, S~Mehdi Tavakkoli, and NZ~Moghadam.
\newblock Application of isogeometric analysis in structural shape
  optimization.
\newblock {\em Scientia Iranica}, 18(4):846--852, 2011.

\bibitem{heydari2013two}
MH~Heydari, MR~Hooshmandasl, FM~Maalek Ghaini, and F~Fereidouni.
\newblock Two-dimensional {L}egendre wavelets for solving fractional poisson
  equation with dirichlet boundary conditions.
\newblock {\em Engineering Analysis with Boundary Elements}, 37(11):1331--1338,
  2013.

\bibitem{heydari2018legendre}
Mohammad~Hossein Heydari and Zakieh Avazzadeh.
\newblock {L}egendre wavelets optimization method for variable-order fractional
  poisson equation.
\newblock {\em Chaos, Solitons \& Fractals}, 112:180--190, 2018.

\bibitem{heydari2019wavelet}
Mohammad~Hossein Heydari, Zakieh Avazzadeh, and Malih~Farzi Haromi.
\newblock A wavelet approach for solving multi-term variable-order time
  fractional diffusion-wave equation.
\newblock {\em Applied Mathematics and Computation}, 341:215--228, 2019.

\bibitem{heydari2019computational}
Mohammad~Hossein Heydari, Zakieh Avazzadeh, and Yin Yang.
\newblock A computational method for solving variable-order fractional
  nonlinear diffusion-wave equation.
\newblock {\em Applied Mathematics and Computation}, 352:235--248, 2019.

\bibitem{hooshmandasl2016numerical}
MR~Hooshmandasl, MH~Heydari, and C~Cattani.
\newblock Numerical solution of fractional sub-diffusion and time-fractional
  diffusion-wave equations via fractional-order {L}egendre functions.
\newblock {\em The European Physical Journal Plus}, 131(8):268, 2016.

\bibitem{huang2014numerical}
Yanghong Huang and Adam Oberman.
\newblock Numerical methods for the fractional {L}aplacian: A finite
  difference-quadrature approach.
\newblock {\em SIAM Journal on Numerical Analysis}, 52(6):3056--3084, 2014.

\bibitem{hughes2005isogeometric}
Thomas~JR Hughes, John~A Cottrell, and Yuri Bazilevs.
\newblock Isogeometric analysis: {CAD}, finite elements, {NURBS}, exact
  geometry and mesh refinement.
\newblock {\em Computer methods in applied mechanics and engineering},
  194(39-41):4135--4195, 2005.

\bibitem{johnson2005higher}
Richard~W Johnson.
\newblock Higher order b-spline collocation at the greville abscissae.
\newblock {\em Applied Numerical Mathematics}, 52(1):63--75, 2005.

\bibitem{kwasnicki2017ten}
Mateusz Kwa{\'s}nicki.
\newblock Ten equivalent definitions of the fractional laplace operator.
\newblock {\em Fractional Calculus and Applied Analysis}, 20(1):7--51, 2017.

\bibitem{kyprianou2017unbiased}
Andreas~E Kyprianou, Ana Osojnik, and Tony Shardlow.
\newblock Unbiased `walk-on-spheres' monte carlo methods for the fractional
  {L}aplacian.
\newblock {\em IMA Journal of Numerical Analysis}, 38(3):1550--1578, 2017.

\bibitem{lischke2018fractional}
Anna Lischke, Guofei Pang, Mamikon Gulian, Fangying Song, Christian Glusa,
  Xiaoning Zheng, Zhiping Mao, Wei Cai, Mark~M Meerschaert, Mark Ainsworth,
  et~al.
\newblock What is the fractional {L}aplacian?
\newblock {\em arXiv preprint arXiv:1801.09767}, 2018.

\bibitem{maryshev2013adjoint}
Boris Maryshev, Alain Cartalade, Christelle Latrille, Maminirina Joelson, and
  Marie-Christine N{\'e}el.
\newblock Adjoint state method for fractional diffusion: parameter
  identification.
\newblock {\em Computers \& Mathematics with Applications}, 66(5):630--638,
  2013.

\bibitem{2018arXiv180203770M}
V.~{Minden} and L.~{Ying}.
\newblock A simple solver for the fractional {L}aplacian in multiple
  dimensions.
\newblock {\em ArXiv e-prints}, February 2018.

\bibitem{nishimura2002fast}
Naoshi Nishimura.
\newblock Fast multipole accelerated boundary integral equation methods.
\newblock {\em Applied mechanics reviews}, 55(4):299--324, 2002.

\bibitem{nishimura1999fast}
Naoshi Nishimura, Ken-ichi Yoshida, and Shoichi Kobayashi.
\newblock A fast multipole boundary integral equation method for crack problems
  in 3d.
\newblock {\em Engineering Analysis with Boundary Elements}, 23(1):97--105,
  1999.

\bibitem{plociniczak2014approximation}
{\L}ukasz P{\l}ociniczak.
\newblock Approximation of the {E}rd\'elyi--{K}ober operator with application
  to the time-fractional porous medium equation.
\newblock {\em SIAM Journal on Applied Mathematics}, 74(4):1219--1237, 2014.

\bibitem{reali2015introduction}
Alessandro Reali and Thomas~JR Hughes.
\newblock An introduction to isogeometric collocation methods.
\newblock In {\em Isogeometric Methods for Numerical Simulation}, pages
  173--204. Springer, 2015.

\bibitem{ros2014dirichlet}
Xavier Ros-Oton and Joaquim Serra.
\newblock The dirichlet problem for the fractional {L}aplacian: regularity up
  to the boundary.
\newblock {\em Journal de Math{\'e}matiques Pures et Appliqu{\'e}es},
  101(3):275--302, 2014.

\bibitem{song1995multilevel}
JM~Song and Weng~Cho Chew.
\newblock Multilevel fast-multipole algorithm for solving combined field
  integral equations of electromagnetic scattering.
\newblock {\em Microwave and Optical Technology Letters}, 10(1):14--19, 1995.

\bibitem{temizer2011contact}
I~Temizer, P~Wriggers, and TJR Hughes.
\newblock Contact treatment in isogeometric analysis with nurbs.
\newblock {\em Computer Methods in Applied Mechanics and Engineering},
  200(9-12):1100--1112, 2011.

\bibitem{wang2017adaptive}
Yufeng Wang, Hui Zhou, Hanming Chen, and Yangkang Chen.
\newblock Adaptive stabilization for {Q}-compensated reverse time migration.
\newblock {\em Geophysics}, 83(1):S15--S32, 2017.

\bibitem{xu2018calibrating}
Kailai Xu and Eric Darve.
\newblock Calibrating multivariate {L}{\'e}vy processes with neural networks.
\newblock {\em arXiv preprint arXiv:1812.08883}, 2018.

\bibitem{xu2018spectral}
Kailai Xu and Eric Darve.
\newblock Spectral method for the fractional {L}aplacian in 2d and 3d.
\newblock {\em arXiv preprint arXiv:1812.08325}, 2018.

\bibitem{xu2018radial}
Yiran Xu, Jingye Li, Guofei Pang, Zhikai Wang, and Xiaohong Chen.
\newblock Radial basis function collocation method for decoupled fractional
  {L}aplacian wave equations.
\newblock {\em arXiv preprint arXiv:1801.01206}, 2018.

\end{thebibliography}

\end{document}